\documentclass[11pt]{amsart}

\usepackage[T1]{fontenc}
\usepackage{lmodern}
\usepackage{graphicx}
\usepackage{mathrsfs}
\usepackage{color}
\usepackage{amssymb}
\usepackage{amsmath}
\usepackage{mathtools}
\usepackage[margin=1.4in]{geometry}
\usepackage[hyperindex,breaklinks,colorlinks=true,
  linkcolor=blue,citecolor=blue,urlcolor=blue]{hyperref}
\usepackage{microtype}
\newtheorem{theorem}{Theorem}
\newtheorem{proposition}[theorem]{Proposition}
\newtheorem{lemma}[theorem]{Lemma}
\newtheorem{corollary}[theorem]{Corollary}
\theoremstyle{definition}

\theoremstyle{definition}
\newtheorem{remark}[theorem]{Remark}

\newcommand{\R}{\mathbb R}

\newcommand{\Vol}{\operatorname{Vol}}
\newcommand{\diam}{\operatorname{diam}}
\newcommand{\tr}{\operatorname{tr}}
\newcommand{\Ric}{\operatorname{Ric}}
\newcommand{\supp}{\operatorname{supp}}
\newcommand{\Div}{\operatorname{div}}

\newcommand{\bangle}[1]{\left\langle #1 \right\rangle}
\title[Stable Capillary Minimal Hypersurfaces in $\mathbb{R}^4_+$]
{Stable Capillary Minimal Hypersurfaces in $\R^4_+$}
\author{}
\subjclass[2020]{Primary 53C42; Secondary 53A10, 49Q05}
\keywords{minimal hypersurface, capillary hypersurface, stability, Bernstein theorem}
\author{Jia Li}
 \address{(J.L.)School of Mathematical Sciences\\
 		Xiamen University\\
 		361005, Xiamen, P.R. China}
 \email{lijiamath@stu.xmu.edu.cn}
 \author{Chenghang Lu}
\address{(C.L.)Albert-Ludwigs-Universit\"at Freiburg,
Mathematisches Institut, Ernst-Zermelo-Str.~1,
D-79104 Freiburg, Germany}
\email{chenghang.lu@math.uni-freiburg.de}
\thanks{C.L. is supported by  DFG,  through
IRTG 3132 (Grant No.~545046569).}

\author{Chao Xia}
\address{(C.X.) School of Mathematical Sciences,
Xiamen University, 361005, Xiamen, P.~R.~China}
\email{chaoxia@xmu.edu.cn}
\thanks{C.X. is supported by NSFC (Grant No. 12271449, 12671069, 12526203, 12526102) and the Natural Science Foundation of Fujian Province of China (Grant No. 2024J011008).}

\begin{document}

\begin{abstract}
We prove that a complete, two-sided, stable capillary minimal hypersurface in
$\R^4_+$ is flat provided its contact angle $\theta$ satisfies
$|\cot\theta|<\kappa_*$, where
$\kappa_*=4\Gamma(3/4)^2/\Gamma(1/4)^2\approx 0.45$.
\end{abstract}
\maketitle

\section{Introduction}

The classical Bernstein problem asks whether an entire minimal graph in
$\R^{n+1}$ must be an affine hyperplane.  The answer is affirmative for
$n\leq7$ and negative for $n\geq8$; the relevant works include those of
Bernstein, Fleming, De Giorgi, Almgren, Simons, and
Bombieri--De Giorgi--Giusti
\cite{Bernstein1927,Fleming1962,DeGiorgi1965,Almgren1966,Simons,BDGG}.
More generally, the stable Bernstein problem asks whether every complete
two-sided stable minimal hypersurface in $\R^{n+1}$ is flat. It was resolved
for $n=2$ independently by
do Carmo--Peng, Fischer-Colbrie--Schoen, and Pogorelov
\cite{doCarmoPeng,FischerColbrieSchoen,Pogorelov}. Under the intrinsic Euclidean
volume growth condition, Schoen--Simon--Yau \cite{SchoenSimonYau} proved this rigidity result for
$n\leq5$. With extrinsic Euclidean volume growth, Schoen--Simon
\cite{SchoenSimon} proved this result for $n\leq6$ for embeddings,
and Bellettini \cite{Bel23} for immersions. Later, it has been proved that the intrinsic and extrinsic Euclidean volume growth conditions are
equivalent \cite{FloritSimon} for stable minimal hypersurfaces. More recently, Chodosh--Li
\cite{ChodoshLiBernstein} proved that every complete two-sided stable minimal
hypersurface in $\R^4$ is flat, without any volume growth assumption.
Alternative proofs were subsequently given by Chodosh--Li \cite{ChodoshLi},
using warped $\mu$-bubbles, and by Catino--Mastrolia--Roncoroni
\cite{Catino-Mastrolia-Roncoroni}, using conformal Bakry--\'{E}mery geometry.
More recently, Cabr\'{e}--Catino--Mari--Mastrolia--Roncoroni
\cite{CabreEtAl} gave a proof based on sharp gradient estimates for the
Green kernel under spectral Ricci bounds, and Hong--Wang
\cite{HongWangRadial} gave a short proof using mixed radial volume comparison.
The volume growth method of \cite{ChodoshLi} was also developed further to
prove stable Bernstein theorems in $\R^5$ and $\R^6$ \cite{CLMS,Mazet}; see
also \cite{LiXia}. The $n=6$ case has been claimed by Hong-Li-Wang \cite{HLW} using a Green function method.

The corresponding boundary case is that of capillary minimal hypersurfaces in \(\mathbb{R}^{n+1}_{+}\). A capillary minimal hypersurface is the critical point of the energy functional \(E(t)\) with respect to all compactly supported admissible variations, where \(E(t)\) is defined by
\[
E(t)=A(t)-\cos\theta\,W(t),
\]
where \(A(t)\) denotes the area and \(W(t)\) denotes the wetting area on the supporting hyperplane \(\partial\mathbb{R}^{n+1}_{+}\). A capillary minimal hypersurface is called stable if
the second variation of $E(t)$ is nonnegative. 

Bernstein theorem for capillary minimal graphs over $\R^{n}_+$ have been
 established in $n=2, 3, 4$, by following a more general result of Hong--Saturnino \cite{HongSaturnino} in $n=2$ (see also \cite{LiZhouZhu,MP21}), a classification of capillary minimal cones by Edelen--Li--Zhu \cite{EdelenLiZhu} and Wang--Zhang \cite{WangZhang} in $n=3,4$.
For $5\le n\le 7$, the corresponding statement holds under additional restrictions on the contact angle $\theta$; see \cite{LiZhouZhu,EdelenLiZhu,WangZhang}. It remains an open and challenging problem to determine whether these restrictions can be removed.

Stable Bernstein theorem for capillary minimal hypersurfaces in $\R^{3}_+$ has been established by
Hong--Saturnino \cite{HongSaturnino} by using the  method of Fischer--Colbrie and Schoen \cite{FischerColbrieSchoen}.

Li-Zhou-Zhu \cite{LiZhouZhu} proved stable capillary Bernstein theorem in $\R^{n+1}_+$ for $3\leq n\leq 5$, provided intrinsic Euclidean volume growth condition and restricted angle range, following the method of Schoen-Simon-Yau \cite{SchoenSimonYau}. Recently,  Wang-Zhang \cite{WangZhang} developed Schoen-Simon regularity and compactness theory for stable capillary minimal
hypersurfaces and apply it to stable capillary Bernstein theorem for any contact angle in $\R^{n+1}_+$ for $3\leq n\leq 4$ under embededness and extrinsic Euclidean volume growth condition. 

In this paper, we apply the $\mu$-bubble method of Chodosh-Li \cite{ChodoshLi} to prove the intrinsic Euclidean volume growth and establish stable capillary Bernstein theorem in $\R^{4}_{+}$, provided that the contact angle $\theta$ belongs to a certain range.
Denote
\begin{equation*}
 \kappa_*=4\frac{\Gamma(3/4)^2}{\Gamma(1/4)^2}
 =0.4569465810\ldots,
 \qquad
 c_*:=\frac{\kappa_*}{\sqrt{1+\kappa_*^2}}
 =0.4156120919\ldots .
\end{equation*}

We assume that
\begin{equation}
 |\cos\theta|<c_*, \hbox{or equivalently }
|\cot\theta|<\kappa_*.
 \label{eq:main-angle}
\end{equation}

The corresponding interval for $\theta$  is
$$
65.442131^\circ < \theta < 114.557869^\circ.
$$
 
The main theorem of this paper is as follows.

\begin{theorem}
\label{thm:main}
Any complete, connected, two-sided stable capillary minimal immersion
\[
 X:(M^3,\partial M)\longrightarrow
 (\R^4_+,\partial\R^4_+)
\]
with contact angle $\theta$ satisfying \eqref{eq:main-angle} is flat.
\end{theorem}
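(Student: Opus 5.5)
We follow the strategy of Chodosh--Li \cite{ChodoshLi}, adapted to the capillary setting. The plan is to prove intrinsic Euclidean volume growth, $\Vol(B_R(p))\le CR^3$, and then invoke the known rigidity under volume growth. For that last step we use Li--Zhou--Zhu \cite{LiZhouZhu}, who need intrinsic volume growth for $n=3$ with a restricted angle. If their angle range does not already contain \eqref{eq:main-angle}, we instead pass to extrinsic growth and use Wang--Zhang \cite{WangZhang}; this requires Florit--Simon type equivalence and an embeddedness argument.

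\textbf{Step 1: conformal change.} Start from the capillary stability inequality
\[
\int_M |\nabla\varphi|^2-|A|^2\varphi^2 \;-\;\int_{\partial M} q\,\varphi^2\ \ge 0,
\]
where the boundary term is $q=\cot\theta\, h_{\partial M}(\nu,\nu)$, up to the standard form. Following \cite{ChodoshLi}, pass to the conformal metric $\tilde g=r^{-2}g$ with $r=|X|$. Since $\Ric_g\ge -|A|^2$ on a minimal hypersurface in flat space, stability yields, on $(M,\tilde g)$, a positive function $u$ satisfying a weighted spectral inequality. In the interior it reads
\[
-\gamma\tilde\Delta u+\widetilde{\Ric}\,u\ \ge\ \lambda u
\]
for some $\gamma<4$ and uniform $\lambda>0$, and it comes with a Robin-type boundary condition along $\partial M$. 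The bulk computation is exactly as in \cite{ChodoshLi}. The new feature is the boundary: the boundary term $q$ and the conformal change of the boundary mean curvature combine into a boundary quantity involving $\cot\theta$ times the second fundamental form of $\partial M$ inside $\partial\R^4_+$. This quantity must be absorbed by a trace-type estimate.

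\textbf{Step 2: the main obstacle, and where $\kappa_*$ enters.} Absorbing the boundary term requires a sharp inequality that trades $\cot\theta\,|A_{\partial}|$ on $\partial M$ against interior terms. I expect the optimal constant to come from a one-dimensional model problem. In Chodosh--Li the spectral ODE for the warped $\mu$-bubble reduces to a Sturm--Liouville problem. Here it becomes a problem on a quarter-plane or an angular interval with a Robin condition, whose first eigenvalue threshold is given by an elliptic-integral constant. The value $4\Gamma(3/4)^2/\Gamma(1/4)^2$ strongly suggests a lemniscatic integral of the form $\int_0^1 (1-t^4)^{-1/2}\,dt$.

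Concretely, I would write the boundary contribution using $|A_\partial|\le$ (a multiple of) $|A|$ restricted to $\partial M$. I would then bound $\int_{\partial M}|A|\varphi^2$ by $\int_M$ of $|A|^2\varphi^2$ and $|\nabla\varphi|^2$, using the divergence of a vector field that equals $-e_{n+1}$ at the boundary. Optimizing a free parameter in this estimate gives an ODE whose solvability condition is $|\cot\theta|<\kappa_*$. This is the step I expect to be delicate.

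\textbf{Step 3: warped $\mu$-bubbles with free boundary.} Using the positive function from Step 1, construct warped $\mu$-bubbles in $(M,\tilde g)$ that meet $\partial M$ orthogonally, i.e.\ free boundary $\mu$-bubbles. Their first and second variation, together with Gauss--Bonnet on surfaces with boundary, bound their diameter and area. The geodesic curvature of $\partial\Sigma$ is controlled by the boundary term handled in Step 2.

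This gives a uniform diameter bound for components of $\tilde g$-annuli, so the $\tilde g$-diameter of conformal annuli is bounded. Arguing as in \cite{ChodoshLi}, this yields a Ricci-type lower bound in a bi-Ricci or almost-nonnegative sense, and hence $\Vol_g(B_R)\le CR^3$. Combined with Step 2, this gives the volume growth, and the theorem follows from the volume-growth rigidity result described in the first paragraph.
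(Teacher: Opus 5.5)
Your final step does not close. Li--Zhou--Zhu's Bernstein theorem under intrinsic volume growth only covers $|\cos\theta|<0.0305\ldots$ when $n=3$. That is far short of the required range $|\cos\theta|<c_*\approx 0.4156$. Your fallback to Wang--Zhang needs embeddedness and extrinsic growth, but the statement is about immersions. Neither a capillary Florit--Simon equivalence nor a reduction from immersions to embeddings is available, so ``an embeddedness argument'' is not something you can supply.

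The paper closes this step with a new integral curvature estimate. Test stability with $|h|^{2/3}\varphi$ and add the Simons inequality with refined Kato constant $5/3$. The boundary terms then partially cancel to $\cot\theta\int_{\partial M}|h|^{-2/3}\bigl(\tfrac12h(\mu,\mu)|h|^2-\tr h^3\bigr)\varphi^2$. This is absorbed using $\bigl|\tfrac12h(\mu,\mu)|h|^2-\tr h^3\bigr|\le\tfrac{4\sqrt2}{9}|h|^3$ and the trace identity $-\sin\theta\int_{\partial M}\zeta=\int_M\langle\nabla\zeta,E_4^T\rangle$. The argument works whenever $|\cos\theta|/\sin^2\theta<9\sqrt3/28$, which contains the main range since $c_*/(1-c_*^2)\approx0.502$. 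It yields $\int_M|h|^{10/3}\varphi^{10/3}\le C\int_M|\nabla\varphi|^{10/3}$, hence $h\equiv0$ under cubic volume growth. The divergence-of-$E_4^T$ trick you propose in Step~2 belongs here, not in the bubble argument.

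In the bubble step you have misidentified the obstacle. Use the half-scalar spectral condition that stability gives. The Robin coefficient $\widehat\beta=\rho(-q-\tfrac12\partial_\mu\log\rho)$ of the Jacobi function and the conformal change $\widehat H_{\partial M}=\rho(-q-2\partial_\mu\log\rho)$ then cancel the $q$-terms exactly. After Gauss--Bonnet, using $\widehat\kappa+\widehat I_{\partial M}(\widehat\nu_\Sigma,\widehat\nu_\Sigma)=\widehat H_{\partial M}$ on $\partial\Sigma$, the only leftover is $-\int_{\partial\Sigma}(\widehat H_{\partial M}-\widehat\beta)=\tfrac32\int_{\partial\Sigma}\partial_\mu\rho$. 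The same sign is also needed pointwise in the Robin diameter estimate. This term lives on the free-boundary curve of a bubble whose location you do not control, so you need $\partial_\mu\rho\le0$ pointwise on $\partial M$. No trace inequality over $M$ can absorb it. For $\rho=r$ one has $\partial_\mu r=\cos\theta\langle X-a,\bar\nu\rangle/r$, which has no sign; that is exactly why $r^{-2}g$ fails.

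The missing idea is the angular factor $\rho=rV(x_4/r)^{-2\tau}$. Since $\partial_\mu(x_4/r)=-\sin\theta/r$ on $\partial M$, one gets $\partial_\mu\log\rho\le(|\cos\theta|-\tau\kappa_*\sin\theta)/r<0$ for $|\cot\theta|<\tau\kappa_*$. Meanwhile the Legendre-type ODE $(1-z^2)V''-2zV'-\tfrac14V=0$ keeps $\rho^2(\tfrac32\Delta\log\rho-\tfrac34|\nabla\log\rho|^2)\ge\lambda>0$. Your guess of a one-dimensional angular problem is right in spirit: $\kappa_*=-2V'(0)/V(0)$.

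Finally, you never prove one-endedness. Cao--Shen--Zhu does not apply directly because of the unsigned boundary term $-\cot\theta\int_{\partial M}h(\mu,\mu)f^2$. The paper instead proves a Liouville theorem for finite-energy Neumann harmonic functions using the tensor $P$, valid for $|\cos\theta|/\sin^2\theta<1$ when $n=3$. One end plus simple connectivity is what makes the boundary of the region $U_R\supset B_M(R)$ a single bubble with $|\Sigma_R|_g\le CR^2$. The volume bound then comes from the capillary Michael--Simon isoperimetric inequality, not from a Ricci-type lower bound as in your Step~3.
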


Combining Theorem~\ref{thm:main} with the standard point-picking and blow-up argument used in \cite[Theorems 1.2 and 1.3]{ChodoshLiBernstein} and with its capillary boundary version in \cite[Theorem 1.6]{HongSaturnino}, we obtain the following curvature estimate.

\begin{corollary}
\label{cor:local-curvature-estimate}
Let $(N^4, g)$ be a complete Riemannian manifold with smooth boundary. Assume that $N$ has sectional curvature
bounded above by $\Lambda$ and injectivity radius bounded below by $\tau$. 
 Let $M^3 \looparrowright N^4$ be a complete, two-sided stable capillary minimal proper immersion whose contact angle $\theta$ satisfies \eqref{eq:main-angle},
 then there exists a
constant $C=C(\theta)$ such that 
\[
 \sup_{p\in M}|h|(p)\min\{1,d_M(p,\partial M\setminus \partial N),\tau,(\sqrt{\Lambda})^{-1}\}\leq C.
\]
\end{corollary}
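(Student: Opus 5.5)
We argue by contradiction through the standard point-picking and blow-up scheme of \cite[Theorems 1.2 and 1.3]{ChodoshLiBernstein}, adapted to the capillary boundary as in \cite[Theorem 1.6]{HongSaturnino}.

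\textbf{Setting up the contradiction.} Suppose no such $C$ exists. Then there are data $(N_k,g_k,\Lambda_k,\tau_k)$, stable capillary minimal immersions $M_k\looparrowright N_k$ with the fixed angle $\theta$, and points $p_k\in M_k$ with
\[
|h_k|(p_k)\,\rho_k(p_k)\to\infty,\qquad \rho_k:=\min\{1,d_{M_k}(\cdot,\partial M_k\setminus\partial N_k),\tau_k,\Lambda_k^{-1/2}\}.
\]
Since the quantity is scaling invariant, we rescale each $N_k$ so that $\tau_k,\Lambda_k^{-1/2}\ge 1$ after normalization; only $d_{M_k}(\cdot,\partial M_k\setminus\partial N_k)$ then matters.

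\textbf{Point-picking and rescaling.} By the standard point-picking lemma we replace $p_k$ by points $q_k$ satisfying $|h_k|(q)\le 2|h_k|(q_k)$ on the intrinsic ball $B_{r_k}(q_k)$, where $r_k|h_k|(q_k)\to\infty$. We then rescale by $\lambda_k=|h_k|(q_k)$. The rescaled ambient manifolds have curvature $\to0$ and injectivity radius $\to\infty$, so they converge, with their boundaries, smoothly on compacts either to $\R^4$ or to $\R^4_+$. Which limit occurs depends on whether $\lambda_k\, d_{N_k}(q_k,\partial N_k)$ stays bounded. The rescaled hypersurfaces have $|h|\le2$ on balls of radius $\to\infty$, with $|h|(q_k)=1$. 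Curvature bounds give uniform local graphical representations, including near the supporting boundary via capillary boundary elliptic estimates. We then pass to a limit complete two-sided immersion $M_\infty$ with $|h|(q_\infty)=1$.

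\textbf{Stability of the limit.} Stability passes to the limit because the second variation form converges: the boundary term involves $\cot\theta$, the second fundamental form of $\partial N$ (which tends to $0$), and $h(\bar\nu,\bar\nu)$, all of which converge smoothly. Two-sidedness is preserved. Hence $M_\infty$ is either a complete stable minimal immersion in $\R^4$, which is flat by \cite{ChodoshLiBernstein}, or a complete stable capillary minimal immersion in $\R^4_+$ with angle $\theta$ satisfying \eqref{eq:main-angle}, which is flat by Theorem~\ref{thm:main}. Either way $|h|\equiv0$, contradicting $|h|(q_\infty)=1$.

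\textbf{Main obstacle.} The delicate step is the compactness near $\partial N$. We must obtain uniform $C^{2,\alpha}$ control up to the boundary for capillary immersions with bounded $|h|$, and ensure that boundary points of $M_k$ do not accumulate in the interior. The latter is excluded since $\partial M_k\cap\partial N_k$ is the only boundary within distance $\to\infty$, as points of $\partial M_k\setminus\partial N_k$ lie far away after rescaling. For these estimates we would invoke the capillary boundary version of the argument in \cite[Theorem 1.6]{HongSaturnino}, noting that the angle condition is used only in the appeal to Theorem~\ref{thm:main}. The constant therefore depends only on $\theta$.
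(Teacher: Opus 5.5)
Your proposal is correct and is essentially the argument the paper intends; the paper states it in a single sentence, citing the point-picking and blow-up scheme of \cite[Theorems 1.2 and 1.3]{ChodoshLiBernstein} and its capillary version \cite[Theorem 1.6]{HongSaturnino}, and you reach the same contradiction, since the blow-up limit is either a complete two-sided stable minimal immersion in $\R^4$, flat by \cite{ChodoshLiBernstein}, or a complete two-sided stable capillary minimal immersion in $\R^4_+$ with the same angle $\theta$, flat by Theorem~\ref{thm:main}, against $|h|(q_\infty)=1$. As in the paper, the convergence of the rescaled ambient spaces to $\R^4$ or $\R^4_+$ tacitly needs a two-sided bound $|\sec_N|\le\Lambda$ and a bound on the second fundamental form of $\partial N$ at the same scale, neither of which the corollary states: an upper curvature bound alone does not force the rescaled curvature to tend to zero, and without the boundary bound $\partial N$ need not flatten.
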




\medskip
\noindent\textbf
 {Strategy and proof outline.}
Inspired by \cite{ChodoshLi,CLMS,Mazet}, we investigate whether the
$\mu$-bubble method can be extended to  stable capillary minimal hypersurfaces
in $\R^4_+$. Two principal difficulties arise. First, the stability inequality (see Section 2)
contains the additional boundary term
\[
 -\cot\theta\int_{\partial M}h(\mu,\mu)f^2,
\]
which has no fixed sign. Consequently, the argument of Cao--Shen--Zhu
\cite{CaoShenZhu} does not directly imply one-endedness. Second, the Gulliver--Lawson conformal metric $r^{-2}g$ (see \cite{GL}) used in \cite{ChodoshLi} is incompatible with the boundary terms that appear in both the stability inequality under the conformal metric and the stability inequality for the $\mu$-bubble functional.

To overcome these difficulties, we introduce two new ingredients.  The first is
a symmetric tensor $P$ associated with a finite-energy Neumann harmonic function.
The quantity $P(\mu,\mu)$ on $\partial M$ agrees with the boundary integrand in
the Bochner--stability inequality, whereas its divergence is controlled by
the nonnegative interior terms in that inequality.  The
divergence theorem then shows that every such function is constant and hence
gives one-endedness in the required angle range; see
Section~\ref{sec:one-end}, especially Proposition~\ref{prop:L2-vanishing} and Corollary~\ref{cor:one-end}.

The second ingredient is a new radial-angular conformal metric $\rho^{-2}g$, $\rho=rV(\frac{x_4}{r})^{-2\tau}$, replacing the radial
conformal metric $r^{-2}g$. Together with the Robin boundary condition for the positive
Jacobi function and the Gauss--Bonnet theorem, this conformal construction
provides the boundary and spectral control needed for the relative warped
$\mu$-bubble argument in the required angle range.

With the one-endedness result and the conformal construction in hand, we obtain that the scalar curvature has a strictly positive spectral lower bound in $(M,\widehat g)$.  At the
boundary, the transformed Robin coefficient satisfies the exact cancellation
\[
\widehat H_{\partial M}-\widehat\beta
=
-\frac32\langle\nabla\rho,\mu\rangle\ge0
\qquad\text{on }\partial M.
\]
Then, we construct a free boundary warped $\mu$-bubble in a compact domain of $(M,\widehat g)$. By
combining the stability inequality of the $\mu$-bubble functional with the Robin spectral diameter estimate,
we obtain uniform bounds for its area and diameter.  Converting back to $g$ and applying the Sobolev inequality, we conclude that $M$ has intrinsic
Euclidean volume growth.  Finally, the integral curvature estimate
proved in Appendix~B yields $h\equiv0$, completing the proof of Theorem~\ref{thm:main}. In addition, we make a comparison with the curvature
estimate of Li--Zhou--Zhu \cite[Appendix~C]{LiZhouZhu}, see Remark \ref{compare-LiZhouZhu}.

\medskip
\noindent\textbf{Disclosure on AI assistance.}
The authors used AI-assisted tools, principally ChatGPT 5.6, to help
identify the function yielding the exact cancellation of the boundary terms,
and verify some calculations. The authors verified and
completed all mathematical arguments and take full responsibility for the
content.

\section{Preliminaries on capillary minimal hypersurfaces}
\label{sec:preliminaries}
In this section, we fix our conventions and recall some preliminaries. By
passing to the universal cover, we may assume throughout that $M$ is simply
connected, since the hypotheses lift and flatness descends.
Let
\[
 \R^4_+:=\{x=(x_1,x_2,x_3,x_4)\in\R^4:x_4\geq0\},
 \qquad
 \partial\R^4_+=\{x_4=0\}.
\]
Let $M^3$ be a smooth manifold with boundary, and let
\[
 X:(M,\partial M)\longrightarrow(\R^4_+,\partial\R^4_+)
\]
be a two-sided proper immersion. For convenience, we identify $M$
with its image and do not distinguish between them. 
Let $\nu$ be
a global unit normal vector field of $M$, let
$E_4=(0,0,0,1)$, and let $\mu$ be the
outward unit co-normal vector field of $\partial M$ in $M$. Choose the unit normal $\bar\nu$ of the immersed surface
$X|_{\partial M}:\partial M\to\partial\R^4_+$ so that the ordered bases $\{\nu,\mu\}$ and
$\{\bar\nu,-E_4\}$ have the same orientation in the normal bundle of $\partial M$.
With the same contact angle convention as in \cite[Section~2]{WangXia}, we have
\begin{equation*}
 \mu=-\sin\theta\,E_4+\cos\theta\,\bar\nu,
 \qquad
 \nu=\cos\theta\,E_4+\sin\theta\,\bar\nu.
\end{equation*}
Thus $\langle\nu,E_4\rangle=\cos\theta$ and
$E_4^T=-\sin\theta\,\mu$.  Equivalently, $\theta$ is the angle between
$\nu$ and $E_4$, or between $\mu$ and $\bar\nu$.
Throughout, $C_c^\infty(M)$ denotes the space of smooth functions whose
supports are compact subsets of $M$ and may meet $\partial M$.
Denote the second
fundamental form and the mean curvature of $X:M\to \mathbb{R}^4_+$  respectively, by

\[
 h(U,V)=\langle\bar\nabla_U\nu,V\rangle,  \qquad H=\tr_g h.
\]
Denote the second
fundamental form and the mean curvature of $\partial M\subset M$, respectively, by
\[
 h^{\partial M}(U,V)=\langle\nabla_U \mu,V\rangle, \qquad  H^{\partial M}=\tr_{g^{\partial M}} h^{\partial M}.
\]
The contact angle condition together with the flatness of $\partial\mathbb R^4_+$ yields
\begin{equation}
 h^{\partial M}=\cot\theta\,h|_{T(\partial M)}.
 \label{eq:capillary-boundary}
\end{equation}
In particular, $\mu$ is a principal direction of $h$ along $\partial M$, as in
\cite[Proposition~2.1]{WangXia}.  Minimality
and the Gauss equation also give
\begin{equation*}
 R_M=-|h|^2,
 \qquad \Ric_M(U,U)=-|h(U,\cdot)|^2.
\end{equation*}
Let $X_{t}:(-\epsilon,\epsilon)\times M\to \R^{n+1}_{+}$ be a family of admissible compactly supported variation, whose variational vector field is $Y$ at $t=0$. Let $f=\bangle{Y,\nu}$, the first variation formula (see \cite{Souam1997}) of $E(t)$ is
\begin{align*}
    E^{\prime}(0)=\int_{M}HfdA+\int_{\partial M}\langle{Y,\mu-\cos{\theta}\bar{\nu}}\rangle ds.
\end{align*}
Thus the critical points are precisely the minimal hypersurfaces
that meet the supporting hyperplane at a constant contact angle
$\theta\in(0,\pi)$. For a capillary minimal hypersurface in $\R^{n+1}_{+}$, the second variation formula of $E(t)$ is
\begin{align*}
    E^{\prime\prime}(0)=-\int_{M}\left(\Delta f+|h|^2f\right)fdA+\int_{\partial M}\left(\frac{\partial f}{\partial\mu}-qf\right)fds,
\end{align*}
where $f\in C_c^{\infty}(M)$ and
$q=\cot\theta\,h(\mu,\mu)$.
From minimality and \eqref{eq:capillary-boundary}, we see that
\begin{equation*}
 q=-H^{\partial M}.
\end{equation*}
Let $K\subset M$ be a smooth connected compact subset with boundary $\partial K$. We call
$\partial K\cap\mathring M$ the \emph{interior boundary} of $K$.
The portion $\partial K\cap\partial M$ lies in $\partial M$ and is not counted
in the relative perimeter. We always assume $\partial K\cap\mathring M$ meets
$\partial M$
orthogonally. We write $\mathring M=M\setminus\partial M$; for a relative
domain $K\subset M$, $\mathring K$ denotes its interior relative to $M$.
The closures, connected components, boundaries, and
perimeters are understood relative to the capillary hypersurface $M$.

At the end of this section, we establish Sobolev inequalities for stable capillary immersed hypersurfaces in $\R^{n+1}_{+}$.
These inequalities hold for all $\theta\in(0,\pi)$.

\begin{lemma}
Let $n\geq3$ and let $M^{n}$
be a complete two-sided capillary minimal immersion in $\R^{n+1}_{+}$, with contact angle
$\theta\in(0,\pi)$. Writing $E_{n+1}=(0,\ldots,0,1)$, one has
\begin{equation}
-\sin\theta\int_{\partial M}\zeta
=\int_M\langle\nabla\zeta,E_{n+1}^T\rangle
\label{eq:trace-identity}
\end{equation}
for every $\zeta\in C_c^\infty(M)$. Moreover,
\begin{equation}
\left(\int_M|\zeta|^{n/(n-1)}\right)^{(n-1)/n}
\leq C_{n,\theta}\int_M|\nabla\zeta|,
\label{eq:L1-Sobolev}
\end{equation}
and, for every $\psi\in C_c^\infty(M)$,
\begin{equation}
\left(\int_M|\psi|^{2n/(n-2)}\right)^{(n-2)/n}
\leq C_{n,\theta}\int_M|\nabla\psi|^2.
\label{eq:L2-Sobolev}
\end{equation}
In particular, $\Vol(M)=\infty$.
\end{lemma}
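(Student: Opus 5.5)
The plan is to prove the three parts in order: the trace identity by a divergence computation, the $L^1$-Sobolev inequality by reflecting $M$ across $\partial\R^{n+1}_+$ and using the Michael--Simon inequality with the trace identity absorbing the boundary term, and the $L^2$-Sobolev inequality and infinite volume as routine consequences.

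\emph{Trace identity.} Since $E_{n+1}$ is a constant vector field and $M$ is minimal,
\[
\Div_M(E_{n+1}^T)=\Div_M E_{n+1}-\langle E_{n+1},\nu\rangle H=0 .
\]
Hence $\Div_M(\zeta E_{n+1}^T)=\langle\nabla\zeta,E_{n+1}^T\rangle$. Integrating over $M$ and applying the divergence theorem gives
\[
\int_M\langle\nabla\zeta,E_{n+1}^T\rangle=\int_{\partial M}\zeta\langle E_{n+1}^T,\mu\rangle .
\]
Along $\partial M$ we have $E_{n+1}^T=-\sin\theta\,\mu$, which yields \eqref{eq:trace-identity}.

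\emph{$L^1$-Sobolev inequality.} Let $\widetilde M$ be the union of $M$ and its mirror image under $x_{n+1}\mapsto -x_{n+1}$. This is an integral varifold in $\R^{n+1}$, and its first variation is a singular measure carried by $\partial M$. For a tangential test vector field $Y$, this boundary part is $\int_{\partial M}\langle Y,\mu\rangle$. The reflected copy contributes the reflected conormal $\mu'$. The horizontal components $\cos\theta\,\bar\nu$ of $\mu$ and $\mu'$ agree, while the vertical components cancel. So the generalized mean curvature of $\widetilde M$ is a measure on $\partial M$ with density $2|\cos\theta|$ with respect to $ds$.

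Extend $\zeta$ evenly to $\widetilde\zeta$ and apply the Michael--Simon (or Allard) Sobolev inequality for varifolds whose first variation is a measure:
\[
\Big(\int_{\widetilde M}|\widetilde\zeta|^{\frac n{n-1}}\Big)^{\frac{n-1}n}\le C_n\Big(\int_{\widetilde M}|\nabla\widetilde\zeta|+\int_{\partial M}2|\cos\theta|\,|\zeta|\Big).
\]
The boundary term is then controlled by \eqref{eq:trace-identity} applied to $|\zeta|$, after a smoothing approximation:
\[
\sin\theta\int_{\partial M}|\zeta|\le\int_M|\nabla\zeta|,
\]
using $|E_{n+1}^T|\le1$. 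This gives \eqref{eq:L1-Sobolev} with $C_{n,\theta}=C_n(2+2|\cot\theta|)$, up to the factor $2$ from doubling.

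If one prefers to avoid varifold language, the same result follows by running the Michael--Simon proof directly on $M$. One tests the first variation with $Y=\phi(|x-x_0|)(x-x_0)\zeta$, which produces an extra boundary term $\int_{\partial M}\langle Y,\mu\rangle$. The tangential part of this term is controlled by the trace identity, and the $\bar\nu$ direction carries the $\cos\theta$ factor.

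\emph{Main obstacle.} The delicate step is justifying that this monotonicity/Sobolev argument survives with a boundary first-variation term. In particular, balls centered near $\partial\R^{n+1}_+$ must be handled so that $C$ depends only on $n$ and $\theta$. I would handle this with the reflection route, since Michael--Simon holds for any varifold with $\|\delta V\|$ a Radon measure, with the $|H|$ term replaced by $\|\delta V\|$.

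\emph{$L^2$-Sobolev inequality and infinite volume.} Apply \eqref{eq:L1-Sobolev} to $\zeta=|\psi|^{2(n-1)/(n-2)}$ and use H\"older in the standard way to obtain \eqref{eq:L2-Sobolev}. For $\Vol(M)=\infty$, suppose instead that $\Vol(M)<\infty$. Take cutoffs $\zeta_R$ equal to $1$ on $B_R$, vanishing outside $B_{2R}$, with $|\nabla\zeta_R|\le 1/R$, where the balls are intrinsic. Then \eqref{eq:L1-Sobolev} gives
\[
\Vol(B_R)^{\frac{n-1}n}\le C\,R^{-1}\Vol(M)\to0,
\]
which is a contradiction. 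Completeness guarantees that these cutoffs have compact support.
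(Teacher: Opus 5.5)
Your proposal is correct and follows the paper's proof. The ingredients match: the trace identity from $\Div_M(E_{n+1}^T)=0$ and $E_{n+1}^T=-\sin\theta\,\mu$ on $\partial M$; a Michael--Simon inequality whose boundary term is absorbed by $\sin\theta\int_{\partial M}|\zeta|\le\int_M|\nabla\zeta|$; and the standard $L^2$-Sobolev and infinite-volume consequences.

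The only difference is that the paper applies the boundary Michael--Simon inequality (Wang--Zhang, Lemma~2.3) directly to $X|_K$. There the extra term $\int_{\partial M}|\zeta|$ is just the singular part $\mu\,ds$ of the first variation, so your reflection is optional; it only lowers that density to $2|\cos\theta|$ and improves the constant from order $1+1/\sin\theta$ to order $1+|\cot\theta|$. Note also that for an immersion the inequality should be run on the domain, as in your fallback, because Allard's varifold version uses ambient test functions.
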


\begin{proof}
Choose a compact smooth relative domain $K$ such that
$\partial K\cap\mathring M$ is disjoint from $\supp\zeta$. Since $M$ is
minimal,
\[
\Div_M(E_{n+1}^T)=0 \quad\text{in } M,
\qquad
\langle E_{n+1}^T,\mu\rangle=-\sin\theta
\quad\text{on }\partial M.
\]
The divergence theorem applied to $\zeta E_{n+1}^T$ gives
\eqref{eq:trace-identity}. Applying the same identity to smooth
approximations of $|\zeta|$ yields
\[
\sin\theta\int_{\partial M}|\zeta|
\leq\int_M|\nabla\zeta|.
\]
The boundary Michael--Simon inequality applied to $X|_K$ gives
\[
\left(\int_M|\zeta|^{n/(n-1)}\right)^{(n-1)/n}
\leq C_n\left(
\int_M|\nabla\zeta|
+\int_M|H|\,|\zeta|
+\int_{\partial M}|\zeta|
\right);
\]
see \cite[Lemma~2.3]{WangZhang} and \cite{MichaelSimon}. Since $H=0$,
the preceding trace estimate proves \eqref{eq:L1-Sobolev}.

Applying \eqref{eq:L1-Sobolev} to
$|\psi|^{2(n-1)/(n-2)}$ and using the Cauchy--Schwarz inequality implies 
\eqref{eq:L2-Sobolev}.

Finally, suppose that $\Vol(M)<\infty$. Let $\chi_R$ be a standard cutoff
function satisfying
\[
0\leq\chi_R\leq1,
\qquad \chi_R\longrightarrow1,
\qquad |\nabla\chi_R|\leq\frac{C}{R}.
\]
Applying \eqref{eq:L2-Sobolev} to $\chi_R$ and letting $R\to\infty$, we obtain 
\[
\Vol(M)^{(n-2)/n}\leq0,
\]
a contradiction. Hence $\Vol(M)=\infty$.
\end{proof}

\section{Harmonic functions and one-endedness}
\label{sec:one-end}

In this section, assume that $n\geq3$ and let
\[
 X:(M^n,\partial M)\longrightarrow(\R^{n+1}_+,\partial\R^{n+1}_+)
\]
be a complete, connected, two-sided stable capillary minimal immersion. We
first prove that every finite-energy Neumann harmonic function is constant
under a suitable restriction on the contact angle, and then deduce that $M$
has exactly one end.

\begin{proposition}
\label{prop:L2-vanishing}
If the contact angle $\theta$ satisfies
\begin{equation}
 \frac{(n-2)|\cos\theta|}{\sin^2\theta}<1,
 \label{eq:one-end-angle}
\end{equation}
then every smooth function $f$ on $M$ satisfying
\[
 \Delta f=0  \quad\text{in } M,\qquad
 \nabla_\mu f=0\quad\text{on }\partial M,
 \qquad \int_M|\nabla f|^2<\infty,
\]
is constant.
\end{proposition}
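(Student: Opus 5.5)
Plan: combine the stability inequality with the Bochner formula applied to $|\nabla f|$, following Schoen--Yau / Cao--Shen--Zhu. The boundary term of indefinite sign is handled by a divergence identity with a symmetric tensor built from $f$.

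\textbf{Step 1: stability and Bochner.} Test stability with $\phi=|\nabla f|\eta$, where $\eta$ is a cutoff with $|\nabla\eta|\le C/R$. This gives
\[
\int_M |h|^2|\nabla f|^2\eta^2+\int_{\partial M}q|\nabla f|^2\eta^2\le\int_M|\nabla(|\nabla f|\eta)|^2 .
\]
Bochner gives $\tfrac12\Delta|\nabla f|^2=|\nabla^2 f|^2+\Ric(\nabla f,\nabla f)$, with $\Ric(\nabla f,\nabla f)=-|h(\nabla f,\cdot)|^2$. I will use two further inputs.
\begin{itemize}
\item The refined Kato inequality for harmonic functions: $|\nabla^2 f|^2\ge\frac{n}{n-1}|\nabla|\nabla f||^2$.
\item The algebraic inequality for a traceless $h$: $|h|^2|\nabla f|^2-|h(\nabla f,\cdot)|^2\ge\frac1{n-1}|h(\nabla f,\cdot)|^2$, or the sharper $\frac{n-1}{n}|h|^2|\nabla f|^2\ge |h(\nabla f,\cdot)|^2$.
\end{itemize}
Multiply Bochner by $\eta^2$ and integrate by parts. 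This produces the boundary term $\int_{\partial M}\eta^2\tfrac12\nabla_\mu|\nabla f|^2$. Since $\nabla_\mu f=0$, we have $\tfrac12\nabla_\mu|\nabla f|^2=\nabla^2f(\nabla f,\mu)=-h^{\partial M}(\nabla f,\nabla f)=-\cot\theta\,h(\nabla f,\nabla f)$, using \eqref{eq:capillary-boundary}. Subtracting the two relations, the terms $\int|\nabla|\nabla f||^2\eta^2$ partly cancel. We are left with nonnegative interior terms of the form $\frac1{n-1}|\nabla|\nabla f||^2$ and $|h|^2|\nabla f|^2-|h(\nabla f,\cdot)|^2$, plus boundary terms
\[
\cot\theta\int_{\partial M}\bigl(h(\mu,\mu)|\nabla f|^2-h(\nabla f,\nabla f)\bigr)\eta^2 ,
\]
plus error terms $O(\int|\nabla f|^2|\nabla\eta|^2)\to0$ as $R\to\infty$, by finite energy.

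\textbf{Step 2: the tensor $P$ (main obstacle).} The boundary integrand has no sign. Along $\partial M$, $\nabla f$ is tangent to $\partial M$, so by minimality the integrand equals $-\cot\theta\,(\tr_{\partial M}h\,|\nabla f|^2+h(\nabla f,\nabla f))$. I will look for a symmetric tensor $P$ on $M$, built from $\nabla f$ and $\nu$ or $E_{n+1}$, that reproduces this on $\partial M$ when paired with $\mu$. A natural candidate uses $\langle\nu,E_{n+1}\rangle=\cos\theta$ and $E^T_{n+1}=-\sin\theta\,\mu$ on $\partial M$:
\[
P=\langle\nu,E_{n+1}\rangle\bigl(|\nabla f|^2 g-2\,df\otimes df\bigr)\ \text{ paired with } E_{n+1}^T,
\]
or a variant built from $h(\nabla f,\cdot)$, $\nabla f$, and $E^T_{n+1}$. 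The aim is that $\int_{\partial M}P(\mu,\cdot)\eta^2$ equals the boundary term divided by $\sin\theta$. The divergence theorem then converts the boundary term into $\int_M \Div(P)\,\eta^2$ plus a cutoff error. Using $\nabla\langle\nu,E_{n+1}\rangle=h(E_{n+1}^T,\cdot)$, $|E^T_{n+1}|\le1$, $\Delta f=0$ and Codazzi, I expect $|\Div P|\le C\bigl(|h||\nabla f||\nabla^2 f|+|h|^2|\nabla f|^2\bigr)$-type bounds with explicit constants. These are absorbed by Cauchy--Schwarz into the good interior terms with coefficients $\frac1{n-1}$ (Kato) and the $h$-algebraic gap, with total weight $\frac{|\cos\theta|}{\sin^2\theta}$ against $\frac1{n-2}$-type constants. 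Finding the exact $P$ and the optimal Cauchy--Schwarz split that yields exactly \eqref{eq:one-end-angle} is where I expect the work. The first attempt would be to match coefficients via Kato's equality case.

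\textbf{Step 3: conclusion.} Under \eqref{eq:one-end-angle}, letting $R\to\infty$ forces $\int_M|\nabla|\nabla f||^2=0$, so $|\nabla f|$ is constant. If that constant were nonzero, $\int_M|\nabla f|^2<\infty$ would contradict $\Vol(M)=\infty$ from the Lemma. Hence $\nabla f\equiv0$ and $f$ is constant.
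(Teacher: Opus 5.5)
\textbf{Verdict: genuine gap.} Step 2 is the whole difficulty of this proposition, and you leave it open. The concrete pieces you do commit to would not close it.

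\textbf{The boundary term and the tensor.} First, your boundary integrand has a sign error. Combining stability with $\tfrac12\partial_\mu|\nabla f|^2=-\cot\theta\,h(\nabla f,\nabla f)$ leaves on the right-hand side
\[
-\cot\theta\int_{\partial M}\bigl(h(\mu,\mu)+h(T,T)\bigr)|\nabla f|^2\eta^2=\cot\theta\int_{\partial M}\tr_E h\,|\nabla f|^2\eta^2 ,
\]
with $T=\nabla f/|\nabla f|$ and $E=\operatorname{span}\{T,\mu\}^\perp$. It is not $\cot\theta\int_{\partial M}\bigl(h(\mu,\mu)|\nabla f|^2-h(\nabla f,\nabla f)\bigr)\eta^2$. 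This $\tr_E h$ structure is exactly what must be reproduced.

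Second, your candidate $\langle\nu,E_{n+1}\rangle(|\nabla f|^2g-2\,df\otimes df)$, paired with $E_{n+1}^T$, has flux $-\sin\theta\cos\theta\,|\nabla f|^2$ through $\partial M$. This contains no $h$, so it cannot match an integrand that is linear in $h$.

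The tensor that works is
\[
P=-|\nabla f|^2h-h(\nabla f,\nabla f)g+df\otimes h(\nabla f,\cdot)+h(\nabla f,\cdot)\otimes df ,
\]
with the divergence theorem applied to $\eta^2P(E_{n+1}^T,\cdot)$ and $\nabla E_{n+1}^T=-\langle E_{n+1},\nu\rangle h$. It has three key properties:
\begin{enumerate}
\item On $\partial M$, $P(\mu,\mu)=\tr_Eh\,|\nabla f|^2$.
\item Codazzi, $\Div h=0$ and $\Delta f=0$ remove every $\nabla h$ term, so $\nabla_iP_{ij}=h_{ik}f_{ik}f_j-h_{ji}f_{ik}f_k-f_{ji}h_{ik}f_k$.
\item In a frame with $e_1=T$, the quantities $P$, $P_{ij}h_{ij}$ and $\nabla_iP_{ij}$ involve only $h_{\alpha\beta},h_{1\alpha},f_{\alpha\beta},f_{1\alpha}$ with $\alpha,\beta\geq2$.
\end{enumerate}

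\textbf{The Step 1 bookkeeping and the sharp constant.} Property 3 shows that your Step 1 reduction is also a problem. Replacing $|\nabla^2f|^2-|\nabla|\nabla f||^2$ by $\frac1{n-1}|\nabla|\nabla f||^2$ via Kato throws away the block $f_{\alpha\beta}$. Yet
\[
\nabla_iP_{i1}=|\nabla f|\sum_{\alpha,\beta\geq2}\bigl(h_{\alpha\beta}-\sigma\delta_{\alpha\beta}\bigr)f_{\alpha\beta},\qquad \sigma=\sum_{\gamma\geq2}h_{\gamma\gamma},
\]
and $|\nabla|\nabla f||^2=\sum_jf_{1j}^2$ does not control the off-diagonal $f_{\alpha\beta}$. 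You must keep
\[
B=|\nabla^2f|^2-|\nabla|\nabla f||^2,\qquad A=|h|^2-|h(T,\cdot)|^2
\]
intact throughout, and invoke Kato only at the end to get $\nabla^2f=0$.

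Finally, the exact threshold $(n-2)|\cos\theta|/\sin^2\theta<1$ needs sharp constants, not a generic bound of the form $C(|h||\nabla f||\nabla^2f|+|h|^2|\nabla f|^2)$; such a bound only gives a smaller angle range. The sharp bounds are
\[
|P|^2\leq(n-2)^2A|\nabla f|^4,\qquad |P_{ij}h_{ij}|\leq(n-2)A|\nabla f|^2,\qquad \sum_j(\nabla_iP_{ij})^2\leq2(n-2)^2AB|\nabla f|^2 .
\]
Combine them using $|E_{n+1}^T|^2+\langle E_{n+1},\nu\rangle^2=1$ and $2AB|\nabla f|^2+A^2|\nabla f|^4\leq(B+A|\nabla f|^2)^2$. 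The interior divergence integrand is then at most $(n-2)\eta^2(B+A|\nabla f|^2)$ plus cutoff terms. After multiplying by $|\cos\theta|/\sin^2\theta$, this is absorbed exactly under the stated angle condition.
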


\begin{proof}
Throughout the proof, \(\nabla|\nabla f|\) denotes the weak gradient, represented by zero on the critical set. 

On the regular set $\{|\nabla f|>0\}$, set
\[
 T:=\frac{\nabla f}{|\nabla f|}.
\]
Since $f$ is harmonic, the Gauss equation and the Bochner formula give
\begin{equation}
 \frac12\Delta|\nabla f|^2
 =|\nabla^2f|^2-|h(T,\cdot)|^2|\nabla f|^2.
 \label{eq:bochner}
\end{equation}
The Neumann boundary condition implies that $T\in T(\partial M)$ and
\[
 \partial_\mu|\nabla f|
 =-h^{\partial M}(T,T)|\nabla f|
 \quad\text{on }\partial M.
\]
Let
\[
 E:=\operatorname{span}\{T,\mu\}^{\perp}\subset T(\partial M).
\]
 Minimality and
\eqref{eq:capillary-boundary} give
\[
 |\nabla f|\partial_\mu|\nabla f|-q|\nabla f|^2
 =\cot\theta\,\tr_Eh\,|\nabla f|^2.
\]
The following products extend smoothly across the critical set through the expressions on the right.
Multiplying \eqref{eq:bochner} by $\phi^2$, integrating by parts, and
using stability with the test function $\phi|\nabla f|$, we obtain
\begin{equation}
 \begin{split}
 &\int_M\phi^2\left(
 |\nabla^2f|^2-|\nabla|\nabla f||^2
 +\bigl(|h|^2-|h(T,\cdot)|^2\bigr)|\nabla f|^2\right)\\
 &\qquad\leq
 \int_M|\nabla f|^2|\nabla\phi|^2
 +\cot\theta\int_{\partial M}
 \phi^2\tr_Eh\,|\nabla f|^2.
 \end{split}
 \label{eq:I-less-J}
\end{equation}
Next we handle the boundary term. We introduce the following symmetric $2$-tensor $P$ on $M$:
\begin{equation}
 P
 =-|\nabla f|^2h-h(\nabla f, \nabla f)g
 +df\otimes h(\nabla f, \cdot)+h(\nabla f, \cdot)\otimes df.
 \label{eq:general-stress}
\end{equation}
One sees from minimality and $\nabla_\mu f=0$ that, along $\partial M$, 
\begin{equation}
 \begin{aligned}
 P_{\mu\mu}
 &=-(h_{TT}+h_{\mu\mu})|\nabla f|^2
 =\tr_Eh\,|\nabla f|^2.
 \end{aligned}
 \label{eq:general-stress-boundary}
\end{equation}
Denote $Y=E_{n+1}^T$. Since $\mu=-\frac{1}{\sin \theta}Y$ along $\partial M$, 
the divergence theorem and
\eqref{eq:general-stress-boundary} give
\begin{align}
-\sin\theta
 \int_{\partial M}\phi^2\tr_Eh\,|\nabla f|^2
 &=\int_{\partial M}P(\mu, Y)\phi^2=\int_M \Div(\phi^2P(Y,\cdot))\nonumber\\&
 =\int_M2\phi\phi_iP_{ij}Y_j
 +\phi^2(\nabla_iP_{ij})Y_j-
 \phi^2\langle E_{n+1},\nu\rangle P_{ij}h_{ij}.\label{eq-boundary-interior}
\end{align}
 In a local orthonormal frame
$\{e_i\}_{i=1}^n$, write
\[
 f_i=\nabla_i f,\qquad
 f_{ij}=\nabla_i\nabla_jf,\qquad
 h_{ij}=h(e_i,e_j).
\]
Fix $p\in\{|\nabla f|>0\}$ and choose the geodesic orthonormal frame
so that $e_1=T$ and $e_n=\mu$ at $p$, so that
\[
 E=\operatorname{span}\{e_2,\ldots,e_{n-1}\}.
\]
Then at $p$,
\[
 f_1=|\nabla f|,
 \qquad f_2=\cdots=f_n=0.
\]
Since both $h$ and $\nabla^2f$ are trace-free,
\[
 h_{11}=-\sum_{\alpha=2}^nh_{\alpha\alpha},
 \qquad
 f_{11}=-\sum_{\alpha=2}^nf_{\alpha\alpha}.
\]
Since
\[
 \partial_j|\nabla f|
 =\frac{f_if_{ij}}{|\nabla f|}
 =f_{1j},
\]
we have
\begin{align*}
 |h|^2-|h(T,\cdot)|^2
 &=\sum_{\alpha,\beta=2}^nh_{\alpha\beta}^2
   +\sum_{\alpha=2}^nh_{1\alpha}^2,
\end{align*}
and
\begin{align*}
 |\nabla^2f|^2-|\nabla|\nabla f||^2
 &=\sum_{\alpha,\beta=2}^nf_{\alpha\beta}^2
   +\sum_{\alpha=2}^nf_{1\alpha}^2.
\end{align*}
Equation \eqref{eq:general-stress} gives
\[
 P_{11}=0,\qquad
 P_{1\alpha}=0,\qquad
 P_{\alpha\beta}
 =\left(\sum_{\gamma=2}^nh_{\gamma\gamma}
  \delta_{\alpha\beta}-h_{\alpha\beta}\right)|\nabla f|^2.
\]
A direct calculation and the Cauchy--Schwarz inequality yield
\begin{align}\label{xeq1}
 \frac{1}{|\nabla f|^4}\sum_{i,j=1}^nP_{ij}^2&=\sum_{\alpha,\beta=2}^n
 \left(\sum_{\gamma=2}^nh_{\gamma\gamma}
  \delta_{\alpha\beta}-h_{\alpha\beta}\right)^2=(n-3)
\left(\sum_{\gamma=2}^nh_{\gamma\gamma}\right)^2
 +\sum_{\alpha,\beta=2}^nh_{\alpha\beta}^2
 \\&\leq(n-2)^2\sum_{\alpha,\beta=2}^nh_{\alpha\beta}^2\le (n-2)^2 \left(|h|^2-|h(T,\cdot)|^2\right)\nonumber
\end{align}
and
\begin{align}\label{xeq2}
  \frac{1}{|\nabla f|^2}\left|\sum_{i,j=1}^nP_{ij}h_{ij}\right|
 &=\left|\left(\sum_{\alpha=2}^nh_{\alpha\alpha}\right)^2
   -\sum_{\alpha,\beta=2}^nh_{\alpha\beta}^2 \right |
   \\&\le(n-2)\sum_{\alpha,\beta=2}^nh_{\alpha\beta}^2 \le(n-2) \left(|h|^2-|h(T,\cdot)|^2\right)\nonumber.
\end{align}
 Using
\[
 \nabla_i h_{jk}=\nabla_j h_{ik},
 \qquad
 \nabla_i h_{ij}=0,
 \qquad
 f_{ii}=0,
\]
and differentiating \eqref{eq:general-stress}, we obtain
\begin{equation}
\begin{aligned}
 \nabla_iP_{ij}=h_{ik}f_{ik}f_j
 -h_{ji}f_{ik}f_k
 -f_{ji}h_{ik}f_k.
\end{aligned}
\label{eq:stress-divergence}
\end{equation}
Using \eqref{eq:stress-divergence}, for $j=1$ we obtain
\begin{align*}
 \frac{1}{|\nabla f|^2}\left|\sum_{i=1}^n\nabla_iP_{i1}\right|^2
 &=\left|\sum_{\alpha,\beta=2}^n
 \left[
 \left(\sum_{\gamma=2}^nh_{\gamma\gamma}\right)
 \delta_{\alpha\beta}-h_{\alpha\beta}
 \right]f_{\alpha\beta}\right|^2\\&\le (n-2)^2
 \left(\sum_{\alpha,\beta=2}^nh_{\alpha\beta}^2\right)
 \left(\sum_{\alpha,\beta=2}^nf_{\alpha\beta}^2\right).
\end{align*}
For $2\leq\alpha\leq n$, the same identity gives
\begin{align*}
  \frac{1}{|\nabla f|^2}\left|\sum_{i=1}^n\nabla_iP_{i\alpha}\right|^2
 &=\left|\left(\sum_{\beta=2}^nf_{\beta\beta}\right)h_{1\alpha}
   -\sum_{\beta=2}^nf_{\alpha\beta}h_{1\beta}+\left(\sum_{\beta=2}^nh_{\beta\beta}\right)f_{1\alpha}
   -\sum_{\beta=2}^nh_{\alpha\beta}f_{1\beta}\right|^2
   \\&\le 2(n-2)^2
 \left[
 \left(\sum_{\alpha,\beta=2}^nf_{\alpha\beta}^2\right)
 \left(\sum_{\alpha=2}^nh_{1\alpha}^2\right)
 +\left(\sum_{\alpha,\beta=2}^nh_{\alpha\beta}^2\right)
 \left(\sum_{\alpha=2}^nf_{1\alpha}^2\right)
 \right].
\end{align*}
Here we also used 
\begin{align*}
 &\sum_{\alpha,\beta=2}^n
 \left[
 \left(\sum_{\gamma=2}^nf_{\gamma\gamma}\right)
 \delta_{\alpha\beta}-f_{\alpha\beta}
 \right]^2
 \leq(n-2)^2\sum_{\alpha,\beta=2}^nf_{\alpha\beta}^2.
\end{align*}
It follows that
\begin{align}\label{xeq3}
 \frac{1}{|\nabla f|^2}\sum_{j=1}^n
 \left(\sum_{i=1}^n\nabla_iP_{ij}\right)^2
 &\leq(n-2)^2
 \left(\sum_{\alpha,\beta=2}^nh_{\alpha\beta}^2\right)
 \left(\sum_{\alpha,\beta=2}^nf_{\alpha\beta}^2\right)\nonumber\\
 &\quad+2(n-2)^2
 \left[
 \left(\sum_{\alpha,\beta=2}^nf_{\alpha\beta}^2\right)
 \left(\sum_{\alpha=2}^nh_{1\alpha}^2\right)
 +\left(\sum_{\alpha,\beta=2}^nh_{\alpha\beta}^2\right)
 \left(\sum_{\alpha=2}^nf_{1\alpha}^2\right)
 \right]\nonumber\\
 &\leq2(n-2)^2
 \left(
 \sum_{\alpha,\beta=2}^nh_{\alpha\beta}^2
 +\sum_{\alpha=2}^nh_{1\alpha}^2
 \right)
 \left(
 \sum_{\alpha,\beta=2}^nf_{\alpha\beta}^2
 +\sum_{\alpha=2}^nf_{1\alpha}^2
 \right)
 \\&=2(n-2)^2
 \bigl(|h|^2-|h(T,\cdot)|^2\bigr)
 \bigl(|\nabla^2f|^2-|\nabla|\nabla f||^2\bigr)\nonumber.
\end{align}
The preceding calculations  are restricted to \(\{|\nabla f|>0\}\). After multiplying by the corresponding powers of \(|\nabla f|\), the estimates also hold at critical points. So, the following estimate holds everywhere on \(M\).

By using \eqref{xeq1}, \eqref{xeq2} and \eqref{xeq3}, and noting that $|Y|\le 1$, we obtain
\begin{align}\label{xeq5}
&\left|2\phi\phi_iP_{ij}Y_j
 +\phi^2(\nabla_iP_{ij})Y_j-
 \phi^2\langle E_{n+1},\nu\rangle P_{ij}h_{ij}\right|
 \\&\le 2|\phi||\nabla \phi| \left(\sum_{i,j=1}^nP_{ij}^2\right)^{1/2}
 +\phi^2 \left[
 \sum_{j=1}^n
 \left(\sum_{i=1}^n\nabla_iP_{ij}\right)^2
 +\left(\sum_{i,j=1}^nP_{ij}h_{ij}\right)^2
 \right]^{1/2}\nonumber
 \\&\le 
 2(n-2)|\phi||\nabla \phi|\sqrt{|h|^2-|h(T,\cdot)|^2}\,
 |\nabla f|^2 \nonumber\\&\quad+ (n-2)\phi^2\left(
 |\nabla^2f|^2-|\nabla|\nabla f||^2
+\bigl(|h|^2-|h(T,\cdot)|^2\bigr)|\nabla f|^2
 \right)\nonumber
 \\&\le  (n-2+\epsilon)\phi^2\left(
 |\nabla^2f|^2-|\nabla|\nabla f||^2
+\bigl(|h|^2-|h(T,\cdot)|^2\bigr)|\nabla f|^2
 \right)+ C_{n,\epsilon}|\nabla f|^2|\nabla\phi|^2.\nonumber
 \end{align}
It follows from \eqref{eq-boundary-interior} and \eqref{xeq5} that
\begin{align}\label{xeq6}
&\left|\cot\theta
 \int_{\partial M}\phi^2\tr_Eh\,|\nabla f|^2\right|
 \\&= \frac{|\cos\theta}{\sin^2\theta|}\left|\int_M2\phi\phi_iP_{ij}Y_j
 +\phi^2(\nabla_iP_{ij})Y_j-
 \phi^2\langle E_{n+1},\nu\rangle P_{ij}h_{ij}\right|\nonumber
 \\&\le \frac{(n-2+\epsilon)|\cos\theta|}{\sin^2\theta}\int_M\phi^2\left(
 |\nabla^2f|^2-|\nabla|\nabla f||^2
 +\bigl(|h|^2-|h(T,\cdot)|^2\bigr)|\nabla f|^2\right)+ C_{n,\epsilon}\int_M|\nabla f|^2|\nabla\phi|^2.\nonumber
 \end{align}

Using \eqref{xeq6} in \eqref{eq:I-less-J}, by assuming $\theta$ satisfies \eqref{eq:one-end-angle}, we get
\begin{align}\label{xeq7}
 \int_M\phi^2\left(
 |\nabla^2f|^2-|\nabla|\nabla f||^2
 +\bigl(|h|^2-|h(T,\cdot)|^2\bigr)|\nabla f|^2
 \right)
 \leq
 C_{n,\theta}
 \int_M|\nabla f|^2|\nabla\phi|^2.
 \end{align}
Choose a standard cutoff function $\phi_R$ associated with the complete
metric and satisfying
\[
 \phi_R=1\ \text{on }B_R,\qquad
 \supp\phi_R\subset B_{2R},\qquad
 |\nabla\phi_R|\leq\frac{C}{R}.
\]
Applying \eqref{xeq7} with $\phi=\phi_R$ and using the refined
Kato inequality, we obtain
\[
\int_{B_R}|\nabla^2f|^2
\leq C_{n,\theta}\int_M|\nabla f|^2|\nabla\phi_R|^2
\leq \frac{C}{R^2}\int_M|\nabla f|^2
\longrightarrow0.
\]
Hence $\nabla^2f=0$ on $M$, and $|\nabla f|$ is constant. If
$|\nabla f|>0$, then
\[
 \int_M|\nabla f|^2
 =|\nabla f|^2\Vol(M)<\infty
\]
would imply $\Vol(M)<\infty$, contradicting
\eqref{eq:L2-Sobolev}. Therefore $|\nabla f|=0$, and $f$ is
constant.
\end{proof}

\begin{corollary}
\label{cor:one-end}
Let $M$ be a complete two-sided, stable capillary minimal hypersurface immersed in $\R^{n+1}_{+}$, $n\geq 3$. If the contact angle $\theta$ satisfies
\eqref{eq:one-end-angle}, then $M$ has exactly one end.
\end{corollary}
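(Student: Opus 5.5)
The plan is the standard Cao--Shen--Zhu / Li--Tam argument: if $M$ had at least two ends, we build a nonconstant finite-energy Neumann harmonic function, and Proposition~\ref{prop:L2-vanishing} then gives a contradiction. The essential inputs are the Sobolev inequality \eqref{eq:L2-Sobolev} and the volume consequence $\Vol(M)=\infty$. Together these force every end to be non-parabolic, in the appropriate Neumann sense, so the harmonic function we construct cannot degenerate to a constant.

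\textbf{Step 1: every end has infinite volume and is non-parabolic.} Fix a compact relative domain $K$ and let $U$ be an unbounded component of $M\setminus K$; its boundary consists of the interior part $\partial U\cap\mathring M$ together with a portion of $\partial M$. Apply \eqref{eq:L2-Sobolev} to $\psi=\chi_R\eta$, where $\eta$ vanishes near $K$ and equals $1$ far out in $U$; this is allowed because test functions may meet $\partial M$. Letting $R\to\infty$ as in the proof of $\Vol(M)=\infty$ shows $\Vol(U)=\infty$.

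Non-parabolicity is the standard consequence of a Sobolev inequality with infinite volume. On $U\cap B_R$, solve the mixed problem
\begin{equation*}
\Delta u_R=0,\qquad u_R=1 \text{ on } \partial U\cap\mathring M,\qquad u_R=0 \text{ on } \partial B_R\cap U,\qquad \nabla_\mu u_R=0 \text{ on } \partial M\cap U.
\end{equation*}
Here $B_R$ is taken with smooth boundary meeting $\partial M$ suitably, or we simply work with the energy minimizer in $H^1$. The Dirichlet energies of $u_R$ decrease in $R$. The Sobolev inequality gives $\|u_R\|_{L^{2n/(n-2)}}^2\le C\int|\nabla u_R|^2$, and this prevents the limit $u=\lim u_R$ from being $\equiv 1$: if $u\equiv1$, a comparison with cutoffs on sets of huge volume contradicts the uniform energy bound. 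The argument is the same as Li--Tam's, and the boundary condition is harmless because all integrations by parts use the Neumann condition. So $0<u\le 1$, $u$ is harmonic with $\nabla_\mu u=0$, $\inf_U u=0$, and $u$ has finite energy.

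\textbf{Step 2: glue the ends into a global function.} Suppose $M\setminus K$ has two unbounded components $U_1$ and $U_2$. Following Li--Tam, solve on $B_R$ the problem
\begin{equation*}
\Delta f_R=0,\qquad f_R=1 \text{ on } \partial B_R\cap U_1,\qquad f_R=0 \text{ on the rest of } \partial B_R\cap\mathring M,\qquad \nabla_\mu f_R=0 \text{ on } \partial M\cap B_R.
\end{equation*}
Comparing with the barriers from Step 1 gives $\int|\nabla f_R|^2\le C$ uniformly in $R$. A subsequence converges locally smoothly, up to the boundary by elliptic estimates for the Neumann problem, to a harmonic $f$ with $\nabla_\mu f=0$, $0\le f\le1$, and finite energy. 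The barriers also force $\liminf f$ along $U_1$ to be close to $1$ in an averaged sense, and close to $0$ along $U_2$, so $f$ is nonconstant. This contradicts Proposition~\ref{prop:L2-vanishing}, given \eqref{eq:one-end-angle}. Hence $M$ has at most one end, and it has at least one because $M$ is noncompact, since $\Vol(M)=\infty$.

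\textbf{Main obstacle.} The delicate step is proving that $f$ is nonconstant, that is, the non-parabolicity estimate in the capillary setting. There the boundary $\partial M$ runs through each end, so the Dirichlet data live only on $\partial B_R\cap\mathring M$. I would handle this with a variational formulation in $H^1$ with Neumann conditions built in, and use the Sobolev inequality \eqref{eq:L2-Sobolev} for functions that are allowed to be nonzero on $\partial M$. This exactly replaces the interior Sobolev inequality used by Cao--Shen--Zhu.
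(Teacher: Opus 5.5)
Your overall strategy works, but it is not the paper's route, and one sub-step needs repair.

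\textbf{Comparison of routes.} You run the classical Li--Tam scheme. First you prove non-parabolicity of each end via mixed Dirichlet--Neumann barrier problems on truncated domains. Then you take $f=\lim f_R$ and get non-constancy from maximum-principle comparison. That comparison should be stated pointwise: $f_R\le u_{2,R}$ on $U_2\cap B_R$ and $1-f_R\le u_{1,R}$ on $U_1\cap B_R$, combined with $\inf u_i=0$. The last fact follows from the minimality of $u_i$ among nonnegative Neumann-harmonic barriers.

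The paper avoids all of this. Since \eqref{eq:L2-Sobolev} holds for test functions that are free on $\partial M$, the completion $\mathcal D^{1,2}(M)$ of $C_c^\infty(M)$ in the Dirichlet norm is a Hilbert space embedded in $L^{2n/(n-2)}(M)$. Take $\eta$ with compactly supported differential, equal to $0$ on one end and $1$ on another. Lax--Milgram gives $u\in\mathcal D^{1,2}(M)$ with $\int\langle du,d\psi\rangle=-\int\langle d\eta,d\psi\rangle$, and $f=\eta+u$ is harmonic with the Neumann condition as natural boundary condition. If $f\equiv c$, then $u$ equals $c$ and $c-1$ on two infinite-volume ends, which is impossible for $u\in L^{2n/(n-2)}$.

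This sidesteps mixed problems with corners where $\partial B_R$ meets $\partial M$, maximum principles, and non-parabolicity theory. Your version gives more information ($0\le f\le 1$ with prescribed asymptotics) and needs only non-parabolicity of two ends rather than a global Sobolev inequality. Here, however, the Sobolev inequality is available, so the paper's argument is much shorter.

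\textbf{The gap.} The proof that every end has infinite volume fails as written. Suppose $\Vol(U)<\infty$. Applying \eqref{eq:L2-Sobolev} to $\chi_R\eta$ and letting $R\to\infty$ only gives $\|\eta\|_{L^{2n/(n-2)}}^2\le C\|\nabla\eta\|_{L^2}^2$. This is no contradiction: the $\nabla\eta$ term is fixed and does not vanish, unlike in the proof of $\Vol(M)=\infty$, where there is no $\eta$. Moving the ramp of $\eta$ out to $\{t<r<t+1\}$ only yields $V(t+1)^{(n-2)/n}\le C\,V(t)$ for $V(t)=\Vol(U\cap\{r>t\})$, which is still consistent with $V(t)\to0$.

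You need an additional ingredient. One option is the paper's isoperimetric argument. Apply \eqref{eq:L1-Sobolev} to $\chi_{\{t<r<T_j\}}$, with $T_j\to\infty$ chosen so that the relative perimeters $A(T_j)\to0$. This gives $V^{(n-1)/n}\le -C V'$, so $V^{1/n}$ reaches zero at finite $t$, contradicting unboundedness of the end.

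Another option is to derive from \eqref{eq:L2-Sobolev} a uniform lower bound $\Vol(B(x,1))\ge c>0$. This holds also at points of $\partial M$, since test functions may meet $\partial M$. An unbounded end then contains infinitely many disjoint unit balls and so has infinite volume.

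Your non-parabolicity argument, and hence the non-constancy of $f$, rests on this fact, so it must be proved properly. With that repair your argument goes through.
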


\begin{proof}
We adapt the one-end argument of Cao--Shen--Zhu
\cite{CaoShenZhu}, based on the Li--Tam theory
\cite{LiTam}, to the Neumann boundary condition.  Set
\[
 \mathcal D^{1,2}(M)
 :=\overline{C_c^\infty(M)}^{\,\|d(\cdot)\|_2}.
\]
By inequality \eqref{eq:L2-Sobolev},  \(\mathcal{D}^{1,2}(M)\) is a Hilbert space continuously embedded in \(L^{2n/(n-2)}(M)\).

Every end of $M$ has infinite volume. We argue by contradiction and suppose that some end has finite volume. Let $r$ be the distance to a smooth compact cross-section and set
\[
 V(t)=|\{r>t\}|,\qquad A(t)=P_M(\{r>t\}),
\]
where $P_M$ denotes the relative perimeter and does not count the portion
lying in $\partial M$. The coarea formula gives $-V'(t)=A(t)$ for almost every $t$.
Using strict approximation in $BV$, apply \eqref{eq:L1-Sobolev} to
$\chi_{\{t<r<T_j\}}$ and choose $T_j\to\infty$ such that $A(T_j)\to0$.
It follows that
\[
 V(t)^{(n-1)/n}\leq C_{n,\theta}A(t)
 =-C_{n,\theta}V'(t)
\]
for almost every $t$. Hence $(V^{1/n})'\leq-c<0$ as long as $V>0$. This would
force $V$ to vanish at a finite distance, contradicting the completeness and
unboundedness of the end.

If $M$ has at least two ends, choose a smooth function $\eta$ with compactly supported differential such that $\eta\equiv 0$ on one end and $\eta\equiv 1$ on another. By the Lax--Milgram theorem, there exists a unique
$u\in\mathcal D^{1,2}(M)$ satisfying
\[
 \int_M\langle du,d\psi\rangle
 =-\int_M\langle d\eta,d\psi\rangle,
 \qquad \psi\in\mathcal D^{1,2}(M).
\]
Then $f=\eta+u$ is a smooth harmonic function with finite Dirichlet energy satisfying the Neumann boundary condition
\[
\frac{\partial f}{\partial\mu}=0 \qquad\text{on }\partial M.
\]
The function $f$ is nonconstant. Indeed, if $f\equiv c$, then $u=c-\eta$.
On the two selected infinite-volume ends, $u$ equals $c$ and $c-1$,
respectively. Since $u\in L^{2n/(n-2)}(M)$, this would imply simultaneously
that $c=0$ and $c-1=0$, a contradiction. Thus $f$ is a nonconstant
finite-energy Neumann harmonic function, contradicting
Proposition~\ref{prop:L2-vanishing}.
\end{proof}
\begin{remark}
Condition~\eqref{eq:one-end-angle} is equivalent to
\begin{equation*}
 |\cos\theta|
 <\frac{\sqrt{(n-2)^2+4}-(n-2)}2.
\end{equation*}
Thus the preceding one-endedness result gives the following ranges.
\begin{center}
\small
\begingroup
\renewcommand{\arraystretch}{1.2}
\begin{tabular}{c|c|c}
ambient half-space & equivalent cosine bound & angle interval\\ \hline
$\R^4_+$ & $|\cos\theta|<(\sqrt5-1)/2$
 & $51.827292^\circ<\theta<128.172708^\circ$\\
$\R^5_+$ & $|\cos\theta|<\sqrt2-1$
 & $65.530199^\circ<\theta<114.469801^\circ$\\
$\R^6_+$ & $|\cos\theta|<(\sqrt{13}-3)/2$
 & $72.375609^\circ<\theta<107.624391^\circ$
\end{tabular}
\endgroup
\end{center}
In particular, for $n=3$, under condition \eqref{eq:main-angle}, $M^3\subset\mathbb{R}^4_+$ has one end, since condition \eqref{eq:main-angle} implies condition \eqref{eq:one-end-angle} when $n=3$.
\end{remark}
\section{Conformal deformation and warped \texorpdfstring{$\mu$}{mu}-bubbles}
\label{sec:conformal}
In this section, we construct an angular conformal metric and 
obtain uniform area and diameter bounds for relative warped $\mu$-bubbles with respect to this new metric.
\subsection{Conformal Deformation }
We first choose a fixed point
$a\in\partial\R^4_+\setminus X(\partial M)$,
set
\begin{equation}
 r=|X-a|,\qquad x_4=\langle X,E_4\rangle,\qquad
 z=\frac{x_4}{r}\in[0,1].
 \label{eq:angular-coordinates}
\end{equation}
In particular, $r>0$ everywhere on $M$.  Fix any $x_0\in M$.  Since the hypotheses and conclusion of Theorem~\ref{thm:main} are invariant under dilations centered at $a$, we rescale by the factor $r(x_0)^{-1}$ and hence assume
\[
r(x_0)=1.
\]

To control the boundary term under the conformal metric, we introduce an
angular factor depending on $z=x_4/r$.  Let $V=V(z)$ be the unique solution of the
ordinary differential equation
\begin{equation}
 (1-z^2)V''-2zV'-\frac14V=0
 \label{eq:angular-ODE}
\end{equation}
with initial data
\begin{equation}
 V(0)=\frac{\Gamma(1/4)^2}{2\pi^{3/2}},
 \qquad
 V'(0)=-\frac{\Gamma(3/4)^2}{\pi^{3/2}}.
 \label{eq:V-initial-data}
\end{equation}
Define
\begin{equation}
 p(z)=-2(\log V)'(z).
 \label{eq:p-definition}
\end{equation}
As proved in Appendix~\ref{app:angular-ode}, $V$ extends smoothly to
$[0,1]$ and, for $0\leq z\leq1$,
\begin{equation}
 V(1)=1,\qquad p(0)=\kappa_*,\qquad p>0,\qquad
 p'\leq-p^2,\qquad zp\leq\frac12.
 \label{eq:angular-properties}
\end{equation}
Moreover, \eqref{eq:angular-ODE} and \eqref{eq:p-definition} give
\begin{equation}
 (1-z^2)\bigl(2p'-p^2\bigr)=4zp-1.
 \label{eq:p-Riccati}
\end{equation}
In particular, $p>0$ implies that $V$ is decreasing, and hence
\begin{equation}
 1=V(1)\leq V(z)\leq V(0).
 \label{eq:V-bounds}
\end{equation}

Using the condition \eqref{eq:main-angle}, we choose $\tau$ such that
\begin{equation}
 \frac{|\cot\theta|}{\kappa_*}<\tau<1
 \label{eq:tau-choice}
\end{equation}
and define
\begin{equation}
 \rho=rV(z)^{-2\tau}.
 \label{eq:rho-definition}
\end{equation}
By \eqref{eq:V-bounds},
\begin{equation}
 V(0)^{-2\tau}r\leq\rho\leq r.
 \label{eq:rho-comparison}
\end{equation}
Next, let $\widehat{g}=\rho^{-2}g$ and rewrite the stability inequality under
the conformal metric $(M,\widehat{g})$.  Throughout this section, a hat denotes
a quantity computed with respect to $\widehat{g}$.
\begin{proposition}
    Let $M^{3}$ be a complete two-sided, stable capillary minimal hypersurface in $\R^{4}_{+}$ with the contact angle $\theta$ satisfying (\ref{eq:main-angle}). Then the following inequality holds in $(M,\widehat{g})$:
\begin{equation*}
\begin{aligned}
&\int_M
\left(
|\widehat\nabla\psi|_{\widehat g}^{2}
+\frac12\widehat R\,\psi^2
\right)dV_{\widehat g}
-\int_{\partial M}
\rho\left(
q+\frac12\langle\nabla\log\rho,\mu\rangle
\right)\psi^2\,dA_{\widehat g}\\
&\qquad\geq
\int_M
\rho^2\left(
\frac32\Delta\log\rho
-\frac34|\nabla\log\rho|^2
\right)\psi^2\,dV_{\widehat g}.
\end{aligned}
\end{equation*}
\end{proposition}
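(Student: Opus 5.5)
The plan is to obtain the inequality by inserting a conformally weighted test function into the stability inequality of Section~\ref{sec:preliminaries}, written for $g$. One integration by parts should then convert everything into $\widehat g$-quantities. Fix $\psi\in C_c^\infty(M)$ (its support may meet $\partial M$). Note that $\rho$ is smooth and positive on $M$, because $r>0$ everywhere by the choice of $a$ and $V$ is smooth and positive on $[0,1]$ by \eqref{eq:V-bounds}.

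For $n=3$ and $\widehat g=\rho^{-2}g$ I will record the standard conformal formulas:
\[
dV_{\widehat g}=\rho^{-3}dV_g,\qquad dA_{\widehat g}=\rho^{-2}dA_g,\qquad |\widehat\nabla\psi|^2_{\widehat g}=\rho^2|\nabla\psi|^2 .
\]
Writing $\widehat g=e^{2w}g$ with $w=-\log\rho$, the scalar curvature transforms as
\[
\widehat R=e^{-2w}\bigl(R-4\Delta w-2|\nabla w|^2\bigr)=\rho^2\bigl(R+4\Delta\log\rho-2|\nabla\log\rho|^2\bigr).
\]
Hence the left-hand side of the claim, written in $g$, equals
\[
\int_M\rho^{-1}\Bigl(|\nabla\psi|^2+\tfrac12R\psi^2+2\Delta\log\rho\,\psi^2-|\nabla\log\rho|^2\psi^2\Bigr)-\int_{\partial M}\rho^{-1}\bigl(q+\tfrac12\langle\nabla\log\rho,\mu\rangle\bigr)\psi^2 .
\]
Similarly, the right-hand side equals $\int_M\rho^{-1}\bigl(\tfrac32\Delta\log\rho-\tfrac34|\nabla\log\rho|^2\bigr)\psi^2$.

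Next I apply stability, $\int_M(|\nabla\varphi|^2-|h|^2\varphi^2)\ge\int_{\partial M}q\varphi^2$, with $\varphi=\rho^{-1/2}\psi$ and $R=-|h|^2$. Expanding gives
\[
|\nabla\varphi|^2=\rho^{-1}\bigl(|\nabla\psi|^2-\tfrac12\langle\nabla\psi^2,\nabla\log\rho\rangle+\tfrac14|\nabla\log\rho|^2\psi^2\bigr).
\]
I integrate the cross term by parts using the divergence theorem on $M$, with boundary term on $\partial M$ and outward conormal $\mu$. Since $\operatorname{div}(\rho^{-1}\nabla\log\rho)=\rho^{-1}(\Delta\log\rho-|\nabla\log\rho|^2)$, this yields
\[
-\tfrac12\int_M\rho^{-1}\langle\nabla\psi^2,\nabla\log\rho\rangle=\tfrac12\int_M\rho^{-1}\bigl(\Delta\log\rho-|\nabla\log\rho|^2\bigr)\psi^2-\tfrac12\int_{\partial M}\rho^{-1}\langle\nabla\log\rho,\mu\rangle\psi^2 .
\]
Substituting, stability becomes
\[
\int_M\rho^{-1}\Bigl(|\nabla\psi|^2+R\psi^2+\tfrac12\Delta\log\rho\,\psi^2-\tfrac14|\nabla\log\rho|^2\psi^2\Bigr)-\int_{\partial M}\rho^{-1}\bigl(q+\tfrac12\langle\nabla\log\rho,\mu\rangle\bigr)\psi^2\ge0 .
\]
The boundary term already matches the claimed one exactly.

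Now compare with the $g$-form of the left-hand side of the claim, and subtract the inequality just obtained. The difference is $\int_M\rho^{-1}(\tfrac32\Delta\log\rho-\tfrac34|\nabla\log\rho|^2)\psi^2$, which is the claimed right-hand side, plus the extra term $\int_M\rho^{-1}(\tfrac12R-R)\psi^2=\tfrac12\int_M\rho^{-1}|h|^2\psi^2\ge0$. Dropping this nonnegative term gives the proposition.

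The only genuine risk is bookkeeping. This includes getting the exponent $-1/2$ of $\rho$ correct so that the gradient term is exactly $\int|\widehat\nabla\psi|^2dV_{\widehat g}$, and getting the dimension-$3$ constants in $\widehat R$ right. I also need to check the sign of the boundary contribution from the integration by parts, since $\mu$ is the outward conormal. Because $\psi$ is compactly supported in $M$ (up to $\partial M$), no cutoff or completeness issues arise in this step.
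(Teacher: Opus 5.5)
Your proposal is correct and follows essentially the same route as the paper. You substitute $\varphi=\rho^{-1/2}\psi$ into the stability inequality, use the dimension-three conformal formulas for $dV_{\widehat g}$, $dA_{\widehat g}$, $|\widehat\nabla\psi|^2$ and $\widehat R$, and integrate the cross term by parts to get the $\tfrac12\langle\nabla\log\rho,\mu\rangle$ boundary term. The only cosmetic difference is the order of steps: the paper first passes to the half-scalar inequality before changing variables, while you change variables first and drop the same nonnegative term $\tfrac12\int_M\rho^{-1}|h|^2\psi^2$ at the end.
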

\begin{proof}
Let \(g\), $h$, $\mu$ and $R_M$ denote the induced metric, the second fundamental form, the outward unit conormal of \(\partial M\), and the scalar curvature of $M$, respectively. First,
the stability inequality implies the half-scalar inequality
\begin{equation*}
\int_M
\left(
|\nabla\phi|^2+\frac12 R_M\phi^2
\right)\,dV_g
-
\int_{\partial M} q\phi^2\,dA_g
\ge 0
\end{equation*}
for every \(\phi\in C_c^1(M)\).
Let \(\rho=rV(z)^{-2\tau}\) be the conformal factor constructed as above.
    Let $\widehat g=\rho^{-2}g$,
the conformal transformation formula of scalar curvature is
\begin{align*}
\widehat R_{M}
&=
\rho^2
\left(
R_M
+
4\Delta\log\rho
-
2|\nabla\log\rho|^2
\right).
\end{align*}
Let $\psi=\rho^{1/2}\phi$. 
Then,
\begin{align}\label{part-1}
\int_M
\left(
|\widehat\nabla\psi|_{\widehat g}^2
+
\frac12\widehat R_{M}\psi^2
\right)dV_{\widehat g}
={}&
\int_M
\left(
|\nabla\phi|^2+\frac12R_M\phi^2
\right)dV_g
+
\int_M
\phi\langle\nabla\phi,\nabla\log\rho\rangle dV_g\notag\\
&+
2\int_M
(\Delta\log\rho)\phi^2dV_g
-\frac34
\int_M
|\nabla\log\rho|^2\phi^2dV_g\notag\\
={}&
\int_M
\left(
|\nabla\phi|^2+\frac12R_M\phi^2
\right)dV_g\notag\\
&\quad+
\int_M
\left(
\frac32\Delta\log\rho
-
\frac34|\nabla\log\rho|^2
\right)
\phi^2dV_g\notag\\
&\quad+
\frac12
\int_{\partial M}
\phi^2\langle\nabla\log\rho,\mu\rangle\,dA_g.
\end{align}
On the other hand, we have
\begin{align}\label{part-2}
\int_{\partial M}
\rho
\left(
-q-\frac12\langle\nabla\log\rho,\mu\rangle
\right)
\psi^2dA_{\widehat g}
=
-\int_{\partial M}q\phi^2dA_g
-
\frac12
\int_{\partial M}
\phi^2\langle\nabla\log\rho,\mu\rangle\,dA_g.
\end{align}
The conclusion follows by adding (\ref{part-1}) to (\ref{part-2}) and applying the stability inequality.
\end{proof}
The next lemma is devoted to handling the terms arising from the stability inequality, which play a key role in the subsequent computations.
\begin{lemma}
\label{lem:rho-estimates}
The function $\rho$ satisfies
\[
 \rho^2\left(\frac32\Delta\log\rho
 -\frac34|\nabla\log\rho|^2\right)
 \geq\lambda,\quad\lambda:=\frac34(1-\tau)V(0)^{-4\tau}>0 \hbox{ in }M,
\]
and
\[ \langle\nabla\rho,\mu\rangle<0\hbox{ on }\partial M.\]
\end{lemma}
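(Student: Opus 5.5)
The plan is to compute everything through ambient homogeneous functions restricted to $M$. The key fact is that for any smooth $F$ on $\R^4$ and a minimal $M^3$, $\Delta_M F=\overline\Delta F-\overline\nabla^2F(\nu,\nu)$, since $H=0$.

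Write $\log\rho=\log r+w(z)$ with $w=-2\tau\log V$, so that $w'=\tau p$ and $w''=\tau p'$ by \eqref{eq:p-definition}. I will use the following notation.
\begin{itemize}
\item $s=|\nabla r|^2=1-\langle\hat x,\nu\rangle^2$, where $\hat x=(X-a)/r$.
\item $\overline\nabla z=(E_4-z\hat x)/r$.
\item $|\overline\nabla z|^2=(1-z^2)/r^2$.
\item $\nabla z=(\overline\nabla z)^T$, so that $|\nabla z|^2=(1-z^2-\langle r\overline\nabla z,\nu\rangle^2)/r^2$.
\end{itemize}
Since $z$ is homogeneous of degree $0$, its ambient Laplacian is $\overline\Delta z=-3z/r^2$ (it restricts to a first spherical harmonic on $S^3$). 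Its Hessian in the normal direction is
\[
\overline\nabla^2z(\nu,\nu)=\frac{1}{r^2}\Bigl(-z+3z\langle\hat x,\nu\rangle^2-2\langle\hat x,\nu\rangle\langle E_4,\nu\rangle\Bigr).
\]
For the radial part I will use $\Delta\log r=(3-2s)/r^2$ and $|\nabla\log r|^2=s/r^2$.

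\textbf{Interior estimate.} The radial part alone contributes
\[
r^2\Bigl(\tfrac32\Delta\log r-\tfrac34|\nabla\log r|^2\Bigr)=\tfrac92-\tfrac{15}4 s\ \ge\ \tfrac34 .
\]
Adding the angular part, $r^2\bigl(\tfrac32\Delta\log\rho-\tfrac34|\nabla\log\rho|^2\bigr)$ becomes a quadratic expression in the normal components $\alpha=\langle\hat x,\nu\rangle$ and $\beta=\langle E_4,\nu\rangle$, with coefficients depending on $z$, $p$, $p'$ and $\tau$. It contains a cross term $-\tfrac32\tau p\,r^2\langle\nabla\log r,\nabla z\rangle$, which I expand as $-\tfrac32\tau p\,(\beta-z)$-type terms plus $\alpha\beta$ corrections. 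It also contains the term $-\tfrac34\tau^2p^2|r\nabla z|^2$, which I bound using $\tau^2\le\tau$.

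The goal is to show this expression is $\ge\tfrac34(1-\tau)$ for all admissible $(\alpha,\beta)$, that is, for $|\alpha|,|\beta|\le1$ with $\alpha^2+\beta^2-2z\alpha\beta\le1-z^2$ (Gram positivity). The natural route is to write the expression as $(1-\tau)\cdot[\text{radial part}]+\tau\cdot[\text{radial}+\text{angular with }\tau=1]$. Then it suffices to check that the $\tau=1$ bracket is $\ge0$. That is where the Riccati identity \eqref{eq:p-Riccati} enters: it should make this bracket vanish exactly in the worst configuration (presumably where $M$ contains the $E_4$ direction and the radial direction). The inequalities $p'\le-p^2$ and $zp\le\tfrac12$ from \eqref{eq:angular-properties} then supply the correct sign in the other directions.

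Finally I multiply by $\rho^2/r^2\ge V(0)^{-4\tau}$, using \eqref{eq:rho-comparison}, to get $\lambda=\tfrac34(1-\tau)V(0)^{-4\tau}$.

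\textbf{Main obstacle.} The hard step is this algebraic minimization over $(\alpha,\beta)$: I must organize the quadratic form so that \eqref{eq:p-Riccati} gives an exact square and the residual is nonnegative. I would first try the extremal case $\alpha=0$ together with maximal $|\nabla z|$, and then complete squares in $\beta$.

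\textbf{Boundary sign.} On $\partial M$ we have $x_4=0$, so $z=0$ and $p=\kappa_*$ there. Using $\mu=-\sin\theta E_4+\cos\theta\bar\nu$,
\[
\langle\nabla r,\mu\rangle=\cos\theta\,\langle\hat x,\bar\nu\rangle,\qquad \langle\nabla z,\mu\rangle=\langle E_4,\mu\rangle/r=-\sin\theta/r .
\]
Therefore
\[
r\langle\nabla\log\rho,\mu\rangle=\cos\theta\langle\hat x,\bar\nu\rangle-\tau\kappa_*\sin\theta\le|\cos\theta|-\tau\kappa_*\sin\theta<0
\]
by the choice \eqref{eq:tau-choice}. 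Since $\rho>0$, this gives $\langle\nabla\rho,\mu\rangle<0$ on $\partial M$.
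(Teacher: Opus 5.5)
Your boundary computation is the paper's. Your interior reduction is also legitimate. Since $-\tfrac34p^2r^2|\nabla z|^2\le0$ and $\tau^2\le\tau$, the quantity $Q_\tau:=r^2\bigl(\tfrac32\Delta\log\rho-\tfrac34|\nabla\log\rho|^2\bigr)$ satisfies $Q_\tau\ge(1-\tau)\bigl(\tfrac34+\tfrac{15}4\alpha^2\bigr)+\tau Q_1$, so $Q_1\ge0$ suffices. This is a mild variant of the paper's route. The paper inserts the Riccati identity for general $\tau$, obtains the exact constant term $\tfrac34(1-\tau)\bigl[1+\tau(1-z^2)p^2\bigr]$, and checks positive semidefiniteness of the remaining form for each $\tau\in(0,1)$, with discriminant margin $\tfrac9{16}\tau(8-5\tau)p^2$. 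Your convexity trick only gives up the unneeded bonus $\tau(1-z^2)p^2$.

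The gap is that $Q_1\ge0$, which is the whole content of the interior estimate, is never proved. You leave it as what the Riccati identity ``should'' do, and parts of your description are inaccurate. Use the variables $\alpha=\langle\hat x,\nu\rangle$ and $w=\langle r\overline\nabla z,\nu\rangle=\beta-z\alpha$. Then
\[
r^2\Delta z=-z(2+\alpha^2)+2\alpha w,\qquad r^2|\nabla z|^2=1-z^2-w^2,\qquad r^2\langle\nabla\log r,\nabla z\rangle=-\alpha w .
\]
So the cross term is $+\tfrac32\tau p\,\alpha w$, and there are no linear ``$\beta-z$'' terms at all, because the ambient part $\langle\hat x,E_4-z\hat x\rangle$ vanishes. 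The constant term of $Q_1$ is $\tfrac34-3zp+\tfrac34(1-z^2)(2p'-p^2)$. The Riccati identity makes this identically zero for every $z$; it is a cancellation, not an exact square. What remains is
\[
Q_1=\Bigl(\tfrac{15}4-\tfrac32zp\Bigr)\alpha^2+\tfrac92p\,\alpha w+\tfrac34(p^2-2p')w^2 .
\]
By $zp\le\tfrac12$ and $p'\le-p^2$, the diagonal coefficients are at least $3$ and $\tfrac94p^2$. Since $(\tfrac92p)^2=\tfrac{81}4p^2\le 27p^2=4\cdot3\cdot\tfrac94p^2$, the form is positive semidefinite on all of $\R^2$. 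No Gram constraint or extremal-case analysis is needed. With this computation inserted, your argument proves the lemma.
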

\begin{proof}
Since
$
\log \rho=\log r-2\tau\log V(z)$ and
$d\log\rho=d\log r+\tau p\,dz$,
we have
\[
\Delta\log\rho
=
\Delta\log r+\tau p\,\Delta z+\tau p'|\nabla z|^2
\]
and
\[
|\nabla\log\rho|^2
=
|\nabla\log r|^2
+2\tau p\langle\nabla\log r,\nabla z\rangle
+\tau^2p^2|\nabla z|^2.
\]
By minimality and a direct computation for $r$, $z$,
we have
\begin{align*}
r^2\Delta\log r
&=1+2\nu(r)^2,
&
r^2|\nabla\log r|^2
&=1-\nu(r)^2,
\\
r^2\Delta z
&=-z\bigl(2+\nu(r)^2\bigr)
  +2r\nu(r)\nu(z),
&
r^2|\nabla z|^2
&=1-z^2-r^2\nu(z)^2,
\\
r^2\langle\nabla\log r,\nabla z\rangle
&=-r\nu(r)\nu(z).
\end{align*}
Substituting these identities and using
$
(1-z^2)(2p'-p^2)=4zp-1,
$
we obtain
\[
\begin{aligned}
\rho^2\left(
\frac32\Delta\log\rho
-\frac34|\nabla\log\rho|^2
\right)
={}&
V^{-4\tau}
\Bigg\{
\frac34(1-\tau)
\left[1+\tau(1-z^2)p^2\right]\\
&\qquad
+\frac34\tau(\tau p^2-2p')r^2\nu(z)^2\\
&\qquad
+\left(\frac{15}{4}-\frac32\tau zp\right)\nu(r)^2
+\frac92\tau p\,r\nu(r)\nu(z)
\Bigg\}.
\end{aligned}
\]
By
$zp\le\frac12$,
$p'\le-p^2$,
$0<\tau<1$,
we have
$
\frac{15}{4}-\frac32\tau zp\ge3
$
and
$$
\frac34\tau(\tau p^2-2p')
\ge
\frac34\tau(\tau+2)p^2.
$$
Moreover,
\[
\begin{aligned}
\left(\frac{15}{4}-\frac32\tau zp\right)
\frac34\tau(\tau p^2-2p')
-
\left(\frac94\tau p\right)^2
\ge
\frac9{16}\tau(8-5\tau)p^2\ge0.
\end{aligned}
\]
Hence the last three terms are nonnegative. Therefore
\[
\begin{aligned}
\rho^2\left(
\frac32\Delta\log\rho-\frac34|\nabla\log\rho|^2
\right)
&\ge
\frac34(1-\tau)V(0)^{-4\tau}
=\lambda,
\end{aligned}
\]
where we used \(V(z)\le V(0)\).

It remains to prove the boundary inequality. Along \(\partial M\),
we have \(z=0\) and
\[
\mu=-\sin\theta\,E_4+\cos\theta\,\bar\nu.
\]
Since both \(X\) and \(a\) lie in the supporting hyperplane,
\[
|\partial_\mu r|\le|\cos\theta|,
\qquad
\partial_\mu z=-\frac{\sin\theta}{r}.
\]
Since \(p(0)=\kappa_*\), we obtain
\[
\frac{\partial_\mu\rho}{\rho}
=
\frac{\partial_\mu r-\tau\kappa_*\sin\theta}{r}
\le
\frac{|\cos\theta|-\tau\kappa_*\sin\theta}{r}.
\]
By
$
\frac{|\cot\theta|}{\kappa_*}<\tau,
$
the right-hand side is strictly negative. Hence
$
\langle\nabla\rho,\mu\rangle
=
\partial_\mu\rho<0.
$
\end{proof}

We now derive a Robin spectral inequality for the conformal metric.
\begin{equation*}
 \int_M\left(|\widehat\nabla\psi|^2
 +\frac12\widehat R_{M}\psi^2\right)dV_{\widehat g}
 +\int_{\partial M}\widehat\beta\psi^2dA_{\widehat g}
 \geq\lambda\int_M\psi^2dV_{\widehat g},
\end{equation*}
where
\begin{equation*}
 \widehat\beta=\rho\left(-q-\frac12\langle\nabla\log\rho,\mu\rangle\right).
\end{equation*}
Hence, we have $\lambda_{1}(-\widehat\Delta+\frac{1}{2}\widehat{R}_{M})\geq \lambda$, and the corresponding Jacobi function $u$ satisfies 
$$\partial_{\widehat\mu} u+\widehat{\beta }u=0,\quad \text{on}\quad \partial M.$$


Next, we use $u$ as the warped function to construct free boundary $\mu$-bubbles. Using the stability of the functional defined as below, we will estimate the diameter and area of the bubbles, assuming that the scalar curvature on $(M,\widehat{g})$ satisfies a spectral lower bound. Let us recall some preliminary results on free boundary $\mu$-bubbles.

\subsection{Free boundary warped \texorpdfstring{$\mu$}{mu}-bubbles}
Let $(K,\partial K)\subset(M,\partial M)$ be a smooth compact manifold with piecewise smooth boundary. We let $\partial K=\partial_{0}K\cup\partial_{+}K\cup \partial_{-}K$, where $\partial_{-}K$ and $\partial_{+}K$ are disjoint and the interior angles between $\partial_{+}K$, $\partial_{-}K$ and $\partial_{0}K$ are less than or equal to $\frac{\pi}{2}$. This angle condition ensures that the minimizer of functional $\mathcal{A}(\Omega)$ remains away from the corners. If necessary, Lemma 4.3 of \cite{Chen-Hong2025} allows us to perturb $K$ near the corners so that this angle condition is satisfied.

Let $\tilde{h}$ be a smooth function on $K\setminus(\partial_{-}K\cup\partial_{+}K)$, with $\tilde{h}\to\pm\infty$ on $\partial_{\pm}K$. We choose a regular value $c_{0}$ of $\tilde{h}$ on $K\setminus(\partial_{-}K\cup\partial_{+}K)$ and take $\Omega_{0}=\tilde{h}^{-1}((c_{0},\infty))$ as the reference set. 

We choose the function $u$ as above and consider the following functional: 
\begin{equation*}
 \mathcal A_{u,\widetilde h}(\Omega)
 =
 \int_{\partial^*\Omega\cap\mathring K}u
 -
 \int_{K}
 (\chi_\Omega-\chi_{\Omega_0})\widetilde h\,u.
\end{equation*}
where the Caccioppoli sets $\Omega\subset K$ satisfy $\Omega\bigtriangleup\Omega_{0}\Subset K\setminus(\partial_{+}K\cup\partial_{-}K)$.
We minimize among relative Caccioppoli sets agreeing with $\Omega_0$ near the
two level boundaries. Since $\widetilde h$ tends to opposite infinities at
these boundaries and $\partial_{\widehat\mu}\widetilde h=0$ on $\partial M$, the
existence and barrier results in \cite[Proposition~15]{ChodoshLiSoap} and
\cite[Lemma~6.2]{Wu} give a stable minimizer $\Omega$ whose reduced boundary
lies in a compact subband and meets $\partial M$ orthogonally. By the regularity theory for free boundary hypersurfaces
\cite{ChodoshEdelenLi,DePhilippisMaggi,
DePhilippisMaggiDimensional,Wu,Grueter}, we know
$\partial^*\Omega$ is smooth. Here we only state the relevant conclusions without proof.
\begin{proposition}[{\cite[Lemma 6.2]{Wu}}]
\label{prop:free-boundary-bubble-existence}
The functional $\mathcal{A}_{u,\widehat{h}}$ admits a minimizer $\Omega$. If $n\leq 7$, the reduced boundary is smooth and meets $\partial_{0}K$ orthogonally.
\end{proposition}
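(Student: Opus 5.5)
The proposition is quoted from \cite[Lemma~6.2]{Wu}; the plan is to reproduce the standard direct-method argument for warped $\mu$-bubbles, adapted to the relative (free boundary) setting.

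\textbf{Lower bound on the admissible class.} We minimize $\mathcal A_{u,\widetilde h}$ over relative Caccioppoli sets $\Omega\subset K$ with $\Omega\bigtriangleup\Omega_0\Subset K\setminus(\partial_+K\cup\partial_-K)$. Since $u>0$ is smooth on the compact set $K$, it is bounded above and below by positive constants. Because $\widetilde h\to\pm\infty$ at $\partial_\pm K$, we can choose a compact band $K'\subset K\setminus(\partial_+K\cup\partial_-K)$ outside of which $|\widetilde h|$ exceeds any prescribed constant. The volume term is then controlled by $\sup_{K'}|\widetilde h|\,u\,|K'|$ plus a contribution from $K\setminus K'$ of favorable sign. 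Indeed, adding points near $\partial_+K$, where $\widetilde h>0$, decreases $\mathcal A$, and removing points near $\partial_-K$ does likewise. So $\mathcal A$ is bounded below, and a minimizing sequence has uniformly bounded relative perimeter in $\mathring K$.

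\textbf{Confinement.} To keep the minimizing sequence in a fixed compact band, we use the barrier argument of \cite[Proposition~15]{ChodoshLiSoap}. Choose level sets $\{\widetilde h=\pm T\}$ with $T$ large, which can be perturbed to be smooth. On the region beyond them, the generalized mean curvature of the level set fails to dominate $\widetilde h$ in the appropriate direction. Cutting and pasting, i.e. replacing $\Omega$ by $\Omega\cup\{\widetilde h>T\}$ and by $\Omega\setminus\{\widetilde h<-T\}$, then does not increase $\mathcal A$. Along $\partial_0K\subset\partial M$, the relative perimeter ignores the portion of the boundary lying in $\partial M$, and the Neumann condition $\partial_{\widehat\mu}\widetilde h=0$ makes the level-set barriers meet $\partial M$ orthogonally, so the comparison is compatible with the free boundary. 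The angle condition at the corners, namely interior angles at most $\pi/2$ between $\partial_\pm K$ and $\partial_0K$, after perturbing via \cite[Lemma~4.3]{Chen-Hong2025} if needed, keeps minimizers off the corners. BV compactness together with lower semicontinuity of the weighted perimeter $\int_{\partial^*\Omega\cap\mathring K}u$ then yields a minimizer $\Omega$.

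\textbf{Regularity.} $\Omega$ is a relative almost-minimizer of perimeter: the weight $u$ is smooth and positive, and the bulk term is a bounded perturbation on the band. For $n\le7$, the interior singular set is empty by classical De Giorgi–Federer theory. At $\partial_0K$, the free boundary regularity theory for almost-minimizers of relative perimeter \cite{Grueter,DePhilippisMaggi,DePhilippisMaggiDimensional,ChodoshEdelenLi} gives smoothness up to $\partial M$. The first variation with tangential variations along $\partial M$ yields the orthogonality condition.

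\textbf{Main obstacle.} The delicate step is the barrier and confinement near the corners $\partial_\pm K\cap\partial_0K$. There we must ensure that the cut-and-paste with level sets of $\widetilde h$ stays admissible and does not create contact with the corners; this is exactly where the $\le\pi/2$ angle hypothesis is needed.
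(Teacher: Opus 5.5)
Your proposal is correct and follows essentially the same route as the paper. The paper gives no independent proof; it invokes the existence and barrier results of \cite[Proposition~15]{ChodoshLiSoap} and \cite[Lemma~6.2]{Wu}, using $\widetilde h\to\pm\infty$ on $\partial_\pm K$ and $\partial_{\widehat\mu}\widetilde h=0$ on $\partial M$, and then the free boundary regularity theory of \cite{Grueter,DePhilippisMaggi,DePhilippisMaggiDimensional,ChodoshEdelenLi}, which are exactly the ingredients you use.

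The one point to tighten is the confinement step. The calibration behind your cut-and-paste needs the whole region beyond the barrier to be foliated by leaves that meet $\partial M$ orthogonally and have uniformly bounded weighted mean curvature, so that $|\widetilde h|$ eventually dominates it. Level sets of $\widetilde h$ do this in the paper's application, where $\widetilde h$ is a function of the Neumann function $d_0$ near the regular values $a,b$ (Lemma~\ref{lem:relative-bubble}). They need not do so for an arbitrary $\widetilde h$, and in that case you should calibrate with a fixed foliation of collars of $\partial_\pm K$ instead.
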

All geometric quantities below are computed with respect to the conformal metric $(M,\widehat{g})$. Let $\widehat{A}_{\Sigma}$, $\widehat{H}_{\Sigma}$ be the second fundamental form and the mean curvature of $\Sigma$, respectively. We denote $\widehat{\nu}_{\Sigma}$ by the outward unit normal vector field of $\Sigma$ inside $\Omega$, denote $\widehat{\nu }_{\partial\Sigma}$ by the unit co-normal vector field of $\partial\Sigma$. Let $\widehat{I}_{\partial M}$ be the second fundamental form of $\partial M$ in $M$ with respect to the outward unit normal $\widehat{\mu }$. The first and the second variation formulas are as follows.
\begin{proposition}[{%
  \cite[Proposition~15]{ChodoshLiSoap} ,
  \cite[Theorem~6.3]{Wu}%
}]Let $\Omega$ be a smooth minimizer of $\mathcal{A}_{u,\tilde{h}}$. Assume that the corresponding normal variational vector field is $\psi\widehat\nu_{\Sigma}$ at $t=0$, then we have
\begin{align*}
\delta\mathcal{A}_{u,\tilde{h}}(\psi)=\int_{\Sigma}\langle{\widehat\nabla u,\widehat\nu_{\Sigma}}\rangle\psi+u\widehat{H}_{\Sigma}\psi-\tilde{h}u\psi+\int_{\partial\Sigma}u\langle\psi\widehat\nu_{\Sigma},\widehat\nu_{\partial\Sigma}\rangle.
\end{align*}
Hence, the critical point of the functional satisfies that
\[
\widehat{H }_{\Sigma}+\langle{\widehat{\nabla}\log u,\widehat\nu_{\Sigma}}\rangle-\tilde{h}=0\quad \text{in}\quad\Sigma,\quad \widehat\nu_{\Sigma}\in T_{x}\partial_{0}K\quad \text{for}\quad x\in\partial\Sigma\subset\partial_{0}K.
\]
The second variation formula is 
\begin{align}\label{eq:raw-relative-second-variation}
 \delta^2\mathcal A_{u,\widetilde h}(\psi)
 ={}&
 \int_\Sigma u|\widehat\nabla\psi|^2
 -\frac12
 (\widehat{R}_M-2\widehat{K}_\Sigma+|\widehat{A}_\Sigma|^2+\widehat{H }_\Sigma^2)u\psi^2\notag\\
 &+\int_\Sigma
 (\widehat{\Delta}_Mu-\widehat{\Delta }_\Sigma u-\widehat{\nu }_\Sigma(\widetilde h\,u))\psi^2-\int_{\partial\Sigma}
 \widehat{I}_{\partial M}(\widehat{\nu}_\Sigma,\widehat{\nu }_\Sigma)u\psi^2.
\end{align}
\end{proposition}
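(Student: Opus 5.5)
We will derive both formulas by differentiating along a one-parameter family of sets $\Omega_t$ obtained by flowing $\Sigma=\partial^*\Omega\cap\mathring K$ with a vector field $Z$. The field $Z$ equals $\psi\widehat\nu_\Sigma$ on $\Sigma$ and is tangent to $\partial M$ along $\partial M$, so that admissibility is preserved. For the first variation, write $\mathcal A(\Omega_t)=\int_{\Sigma_t}u-\int_K(\chi_{\Omega_t}-\chi_{\Omega_0})\widetilde h u$.

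\textbf{First variation.} The area term contributes
\[
\int_\Sigma\bigl(\langle\widehat\nabla u,\widehat\nu_\Sigma\rangle\psi+u\widehat H_\Sigma\psi\bigr)+\int_{\partial\Sigma}u\langle Z,\widehat\nu_{\partial\Sigma}\rangle.
\]
This comes from differentiating $u$ along $Z$, from the standard variation of area $\int\Div_\Sigma Z$, and from the divergence theorem on $\Sigma$. The bulk term contributes $-\int_\Sigma\widetilde h u\psi$ by the coarea/transport formula. Criticality for all interior $\psi$ gives the prescribed mean curvature equation. Freedom of $\psi$ on $\partial\Sigma$ with $Z$ tangent to $\partial M$ then forces $\widehat\nu_{\partial\Sigma}\perp T\partial M\cap T\Sigma^\perp$. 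In other words, $\Sigma$ meets $\partial_0K$ orthogonally, i.e.\ $\widehat\nu_\Sigma\in T\partial_0K$.

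\textbf{Second variation.} At a critical point we differentiate the first variation once more, with $Z=\psi\widehat\nu_\Sigma$ on $\Sigma$. The interior integrand is $\psi u\,\mathcal H$, where $\mathcal H=\widehat H_\Sigma+\langle\widehat\nabla\log u,\widehat\nu\rangle-\widetilde h$. Since $\mathcal H=0$, only $\psi u\,\partial_t\mathcal H$ survives. We then use the following pieces:
\begin{itemize}
\item $\partial_t\widehat H_\Sigma=-\widehat\Delta_\Sigma\psi-(\widehat{\Ric}(\widehat\nu,\widehat\nu)+|\widehat A_\Sigma|^2)\psi$;
\item $\partial_t\langle\widehat\nabla\log u,\widehat\nu\rangle=\psi\,\widehat\nabla^2\log u(\widehat\nu,\widehat\nu)-\langle\widehat\nabla_\Sigma\log u,\widehat\nabla_\Sigma\psi\rangle$;
\item $\partial_t\widetilde h=\psi\,\widehat\nu(\widetilde h)$.
\end{itemize}

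Next we integrate $-u\psi\widehat\Delta_\Sigma\psi$ by parts. This produces $u|\widehat\nabla\psi|^2$, cross terms that cancel the tangential gradient term, and a boundary term $-\int_{\partial\Sigma}u\psi\partial_{\widehat\nu_{\partial\Sigma}}\psi$.

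We then rewrite the Hessian using
\[
\widehat\Delta_M u=\widehat\Delta_\Sigma u+\widehat\nabla^2u(\widehat\nu,\widehat\nu)+\widehat H_\Sigma\,\partial_{\widehat\nu}u.
\]
Substituting $\widehat H_\Sigma=\widetilde h-\partial_\nu\log u$ cancels the $(\partial_\nu u)^2/u$ terms and yields $\widehat\Delta_Mu-\widehat\Delta_\Sigma u-\widehat\nu(\widetilde hu)$ once the term $-\psi^2\widetilde h\,\partial_\nu u$ is grouped in.

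Finally we apply the Schoen–Yau rewrite from the Gauss equation,
\[
\widehat{\Ric}(\widehat\nu,\widehat\nu)+|\widehat A_\Sigma|^2=\tfrac12\bigl(\widehat R_M-2\widehat K_\Sigma+|\widehat A_\Sigma|^2+\widehat H_\Sigma^2\bigr),
\]
where $\widehat K_\Sigma$ denotes half the scalar curvature of $\Sigma$.

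\textbf{Boundary term (main obstacle).} The delicate step is the boundary contribution. We must compute $\partial_{\widehat\nu_{\partial\Sigma}}\psi$ together with the variation of the boundary integrand. Because $\Sigma$ meets $\partial M$ orthogonally, $\widehat\nu_{\partial\Sigma}=\widehat\mu$. Since $Z$ stays tangent to $\partial M$, differentiating the orthogonality condition gives
\[
\partial_{\widehat\mu}\psi=\widehat I_{\partial M}(\widehat\nu_\Sigma,\widehat\nu_\Sigma)\psi
\]
up to the sign convention. Combined with $-\int_{\partial\Sigma}u\psi\partial_{\widehat\mu}\psi$ and the vanishing of the first-order boundary integrand at a critical point, this produces exactly $-\int_{\partial\Sigma}\widehat I_{\partial M}(\widehat\nu_\Sigma,\widehat\nu_\Sigma)u\psi^2$. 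Signs must be fixed consistently with $\widehat I_{\partial M}$ defined via the outward $\widehat\mu$. Tangential components of the variation contribute nothing at a critical point, so the formula holds as stated, in agreement with \cite{ChodoshLiSoap,Wu}.
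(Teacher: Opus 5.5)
The paper gives no proof of this proposition; it quotes the formulas from Chodosh--Li and Wu. Your direct derivation is the standard route, and its interior part is correct. The variations of $\widehat H_\Sigma$, $\langle\widehat\nabla\log u,\widehat\nu_\Sigma\rangle$ and $\widetilde h$, the cancellation of the tangential cross terms, the Hessian rewrite via $\widehat H_\Sigma=\widetilde h-\partial_{\widehat\nu}\log u$, and the Gauss-equation identity all check out.

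\textbf{The gap is the boundary step.} It is wrong as written, even though the formula you land on is right.

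\emph{First}, the identity $\partial_{\widehat\mu}\psi=\widehat I_{\partial M}(\widehat\nu_\Sigma,\widehat\nu_\Sigma)\psi$ does not follow from $Z$ being tangent to $\partial M$. Since $\widehat\nu_\Sigma\in T\partial M$ along $\partial\Sigma$, every $\psi\in C^\infty(\Sigma)$ is the normal component of such a $Z$. That relation is the linearized orthogonality condition. It holds only for variations through hypersurfaces that stay orthogonal to $\partial M$, which is not assumed.

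\emph{Second}, the boundary integrand $u\langle Z,\widehat\nu_{\partial\Sigma_t}\rangle$ vanishes at $t=0$, but its $t$-derivative does not, so it cannot be discarded.

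As it stands, you have proved the formula only for $\psi$ satisfying that Robin condition. The paper, however, applies it with $\psi=u^{-1/2}$. It also applies it with an eigenfunction $w$ whose Robin condition is the natural boundary condition \emph{produced} by the quadratic form, not an a priori constraint.

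Concretely, take the equatorial disk in the unit ball of $\R^3$ with $u\equiv1$, $\widetilde h\equiv0$, $\psi\equiv1$. Your expression before substitution, $\int_\Sigma|\nabla\psi|^2-\int_{\partial\Sigma}\psi\,\partial_\mu\psi$, equals $0$. The actual second derivative of the area $\pi(1-t^2)$ of $\{x_3=t\}\cap B$ is $-2\pi$. This is what the stated formula gives, since $\widehat I_{\partial M}\equiv1$ on unit vectors of the unit sphere.

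\textbf{The fix} is to differentiate the boundary term directly. Let $\bar\nu_t$ be the unit normal of $\partial\Sigma_t$ inside $\partial M$, with $\bar\nu_0=\widehat\nu_\Sigma$. Since $Z\in T\partial M$, you have
\[
\langle Z,\widehat\nu_{\partial\Sigma_t}\rangle=-\langle Z,\bar\nu_t\rangle\langle\widehat\nu_t,\widehat\mu\rangle .
\]
Then $\langle\widehat\nu_0,\widehat\mu\rangle=0$ and $\widehat\nabla_t\widehat\nu_t|_{t=0}=-\widehat\nabla_\Sigma\psi$ give
\[
\frac{d}{dt}\Big|_{t=0}\int_{\partial\Sigma_t}u\langle Z,\widehat\nu_{\partial\Sigma_t}\rangle
=\int_{\partial\Sigma}u\psi\bigl(\partial_{\widehat\mu}\psi-\widehat I_{\partial M}(\widehat\nu_\Sigma,\widehat\nu_\Sigma)\psi\bigr).
\]
Add this to the term $-\int_{\partial\Sigma}u\psi\,\partial_{\widehat\mu}\psi$ from your integration by parts of $-u\psi\widehat\Delta_\Sigma\psi$. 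The normal derivatives cancel, leaving exactly $-\int_{\partial\Sigma}\widehat I_{\partial M}(\widehat\nu_\Sigma,\widehat\nu_\Sigma)u\psi^2$ for every $\psi$.

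A minor point in the first variation: the conclusion should read $\widehat\nu_{\partial\Sigma}\perp T\partial M$, i.e.\ $\widehat\nu_{\partial\Sigma}=\widehat\mu$. The space $T\partial M\cap T\Sigma^\perp$ you wrote is not the relevant one.
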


We now proceed to construct warped $\mu$-bubbles on compact domains corresponding to the above functional, from which we derive diameter and area estimates. 
\begin{lemma}
\label{lem:relative-bubble}
Let $K\subset M$ be a compact, connected, smooth relative domain whose interior
boundary
\[
 \Gamma=\partial K\cap\mathring M
\]
is nonempty, connected, and meets $\partial M$ orthogonally. If
$C\subset\mathring K$ is a nonempty, connected, compact set with
\begin{equation*}
 d_K(C,\Gamma)>\frac{5\pi}{\sqrt\lambda},
\end{equation*}
then there is a compact, connected relative collar
$\mathcal C\subset K\setminus C$, containing a one-sided neighborhood of
$\Gamma$ in $K$, such that
\begin{equation*}
 \partial\mathcal C\cap\mathring M
 =\Gamma\sqcup\mathcal B,
 \qquad
 \mathcal B=\Sigma_1\sqcup\cdots\sqcup\Sigma_m,
 \quad m\geq1.
\end{equation*}
The inner boundary $\mathcal B$ is contained in the
$5\pi/\sqrt\lambda$ neighborhood of $\Gamma$, and every $\Sigma_j$ is a
smooth, two-sided, stable warped $\mu$-bubble component meeting
$\partial M$ orthogonally. Each component $\Sigma=\Sigma_j$ is
diffeomorphic to either $\mathbb{S}^2$ or $\mathbb{D}^2$ and satisfies
\begin{equation}
 \begin{array}{c|c|c}
 \Sigma&|\Sigma|&\diam\Sigma\\ \hline
 \mathbb{S}^2&\leq8\pi/\lambda&\leq2\pi/\sqrt\lambda\\
 \mathbb{D}^2&\leq4\pi/\lambda&\leq2\pi/\sqrt\lambda.
 \end{array}
 \label{eq:bubble-bounds}
\end{equation}
More precisely,
\begin{equation}
 \mathcal C\subset
 \{x\in K:d_K(x,\Gamma)<5\pi/\sqrt\lambda\}.
 \label{eq:bubble-collar-location}
\end{equation}
\end{lemma}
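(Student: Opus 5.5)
We follow the warped $\mu$-bubble scheme of Chodosh--Li, adapted to the free boundary setting of Proposition~\ref{prop:free-boundary-bubble-existence}. Everything is computed in $(M,\widehat g)$, and the warping function is the positive first eigenfunction $u$ of $-\widehat\Delta+\frac12\widehat R_M$ with Robin condition $\partial_{\widehat\mu}u+\widehat\beta u=0$. We have $\lambda_1\ge\lambda$, so $-\widehat\Delta u+\frac12\widehat R_M u\ge\lambda u$.

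\textbf{Construction of the bubble.}
\begin{itemize}
\item Let $d(x)=d_K(x,\Gamma)$, smoothed so that $|\widehat\nabla d|\le 1+\epsilon$ and $\partial_{\widehat\mu}d=0$ near $\partial M$; the orthogonality of $\Gamma$ to $\partial M$ allows this.
\item Set $L=4\pi/\sqrt\lambda+\epsilon'$ and work in the band $\{\epsilon_0< d< L+\epsilon_0\}$, which avoids $C$.
\item Choose $\widetilde h(x)=-c\tan(c(d-\epsilon_0)-\pi/2)$ type, with $c=\sqrt\lambda/2$ up to constants. It tends to $\pm\infty$ at the two ends and satisfies $\partial_{\widehat\mu}\widetilde h=0$ on $\partial M$.
\item With $\lambda$ normalized to the constant in front of $\widetilde h^2$, the choice gives
\[\lambda+\tfrac{1}{2}\widetilde h^2-|\widehat\nabla\widetilde h|\ge\lambda/2\cdot(\text{const})>0.\]
We will tune the constants; the $5\pi/\sqrt\lambda$ leaves room.
\item Minimize $\mathcal A_{u,\widetilde h}$ with $\Omega_0$ the side near $\Gamma$. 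Proposition~\ref{prop:free-boundary-bubble-existence} and the barrier argument give a smooth minimizer. Its boundary $\mathcal B$ lies in the band and meets $\partial M$ orthogonally.
\item Define $\mathcal C$ as the component region between $\Gamma$ and $\mathcal B$ (the connected component of $\Omega$ adjacent to $\Gamma$), discarding bubble components not bounding it. This gives \eqref{eq:bubble-collar-location}. Also $\mathcal B\neq\emptyset$, since $\Gamma$ is separated from $C$ and the reference set forces a boundary inside the band.
\end{itemize}

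\textbf{Second variation.} Take $\psi=u^{-1/2}\varphi$ in \eqref{eq:raw-relative-second-variation}, following Chodosh--Li. Use the first variation $\widehat H_\Sigma=\widetilde h-\langle\widehat\nabla\log u,\widehat\nu_\Sigma\rangle$, the eigenvalue inequality for $u$, and the usual completion of squares. This yields, for each component $\Sigma$,
\[
\int_\Sigma |\nabla_\Sigma\varphi|^2+(\widehat K_\Sigma-\lambda')\varphi^2\;+\;\int_{\partial\Sigma}\Big(-\widehat I_{\partial M}(\widehat\nu_\Sigma,\widehat\nu_\Sigma)-\partial_{\widehat\mu}\log u\cdot\tfrac12\cdots\Big)\varphi^2\ \ge 0,
\]
up to the exact $\frac12$-weights, with $\lambda'$ of order $\lambda$ (e.g.\ $\lambda/2$ after the $\widetilde h$ terms).

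\textbf{Boundary term (main obstacle).} The boundary terms combine as follows. Because $\Sigma\perp\partial M$, we have $\widehat\nu_{\partial\Sigma}=\widehat\mu$. The geodesic curvature of $\partial\Sigma$ in $\Sigma$ equals $\widehat H_{\partial M}-\widehat I_{\partial M}(\widehat\nu_\Sigma,\widehat\nu_\Sigma)$. The Robin condition converts $\partial_{\widehat\mu}u/u$ into $-\widehat\beta$. The net boundary coefficient then becomes $\widehat k_{\partial\Sigma}+(\widehat\beta-\widehat H_{\partial M})$ with a favorable sign. By the identity $\widehat H_{\partial M}-\widehat\beta=-\frac32\langle\nabla\rho,\mu\rangle\ge0$ and Lemma~\ref{lem:rho-estimates}, this gives
\[\int_\Sigma|\nabla\varphi|^2+\widehat K_\Sigma\varphi^2+\int_{\partial\Sigma}\widehat k_{\partial\Sigma}\varphi^2\ge\lambda'\int_\Sigma\varphi^2.\]
Verifying this cancellation carefully is where I expect the work to be. I would first recompute $\widehat H_{\partial M}$ via the conformal change, $\widehat H_{\partial M}=\rho(H^{\partial M}+2\partial_\mu\log\rho)$, and $\widehat\beta$, to confirm the stated identity.

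\textbf{Conclusions.}
\begin{itemize}
\item Take $\varphi=1$. Gauss--Bonnet gives $2\pi\chi(\Sigma)\ge\lambda'|\Sigma|>0$, so $\Sigma$ is $\mathbb S^2$ or $\mathbb D^2$, with $|\Sigma|\le 4\pi\chi/\lambda'$. This yields $8\pi/\lambda$ and $4\pi/\lambda$ once $\lambda'=\lambda$ is arranged by the constant choice.
\item The diameter bound follows from the Robin spectral diameter estimate for surfaces, by Bonnet--Myers type argument. Test functions supported along a minimizing path, as in Chodosh--Li's Lemma for $\lambda_1(-\Delta+K)\ge\lambda$, give $\diam\le 2\pi/\sqrt\lambda$. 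The boundary curvature term is handled by reflection/doubling or by the free boundary version of that lemma.
\end{itemize}
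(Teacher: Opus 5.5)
Your outline follows the paper's proof. It uses the same Neumann-smoothed distance band with a $\tan$ forcing term, and the test function $\psi=u^{-1/2}$ in \eqref{eq:raw-relative-second-variation}. It also uses the Robin condition to turn $-\int_\Sigma\widehat\Delta_\Sigma\log u$ into $\int_{\partial\Sigma}\widehat\beta$, the splitting $\widehat H_{\partial M}=\widehat\kappa+\widehat I_{\partial M}(\widehat\nu_\Sigma,\widehat\nu_\Sigma)$, Gauss--Bonnet, and Wu's Robin spectral diameter estimate. The only real difference is packaging of the diameter step. You test with $u^{-1/2}\varphi$ for arbitrary $\varphi$, while the paper takes the first eigenfunction $w$ of the $u$-weighted form and feeds $f=uw$ into \cite[Lemma~8.2]{Wu}. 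Both work, but several computations are wrong as written.

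\textbf{Sign of $\widehat H_{\partial M}$.} Your conformal formula, the one you planned to use for the step you call the main obstacle, has the wrong sign. For $\widehat g=\rho^{-2}g$ one has $\widehat H_{\partial M}=\rho\bigl(H^{\partial M}-2\partial_\mu\log\rho\bigr)$. With your sign, $\widehat H_{\partial M}-\widehat\beta$ would equal $\frac52\partial_\mu\rho<0$ and the cancellation would fail. The correct sign gives $-\frac32\partial_\mu\rho\ge0$.

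\textbf{Forcing constants and the area bound.} A forcing term with equal amplitude and frequency $c$ violates $\lambda+\widetilde h^2-2|\widehat\nabla\widetilde h|\ge0$ near the ends of the band. The coefficient of $\tan^2$ is $c^2(1-2|\widehat\nabla d|)<0$ where $|\widehat\nabla d|\approx1$. You need amplitude $\ge2\cdot$frequency$\cdot\sup|\widehat\nabla d|$ and amplitude$\cdot$frequency$\cdot\sup|\widehat\nabla d|\le\lambda/2$. The paper's $\widetilde h=-\sqrt\lambda\tan\bigl(\pi(d_0-a)/(b-a)-\pi/2\bigr)$, with $|\widehat\nabla d_0|\le2$ and $b-a>4\pi/\sqrt\lambda$, satisfies both. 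This gives $\lambda'=\lambda/2$. Gauss--Bonnet then yields $|\Sigma|\le2\pi\chi(\Sigma)/\lambda'=4\pi\chi(\Sigma)/\lambda$, which is exactly the table. Your ``$|\Sigma|\le4\pi\chi/\lambda'$ with $\lambda'=\lambda$'' contains two compensating slips, and $\lambda'=\lambda$ cannot be arranged with a forcing term on a finite band anyway.

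\textbf{Gradient coefficient.} With $\psi=u^{-1/2}\varphi$, the cross term $\varphi\langle\widehat\nabla_\Sigma\varphi,\widehat\nabla_\Sigma\log u\rangle$ can only be absorbed by $-\frac34|\widehat\nabla_\Sigma\log u|^2\varphi^2$ at the cost of a gradient coefficient $\frac43$. So what you actually get is $\int_\Sigma\frac43|\widehat\nabla\varphi|^2+(\widehat K_\Sigma-\frac\lambda2)\varphi^2+\int_{\partial\Sigma}\widehat\kappa\varphi^2\ge0$, not coefficient $1$. This is harmless. For the first eigenfunction $w>0$ of this form, $f=w^{4/3}$ satisfies $\widehat\Delta_\Sigma f\le-(\frac\lambda2-\widehat K_\Sigma)f+\frac{|\widehat\nabla f|^2}{4f}$ and $\partial_{\widehat\mu}f+\widehat\kappa f\ge0$. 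That implies the hypothesis the paper feeds into Wu's lemma with $K_0=\lambda/2$, so $\diam\Sigma\le2\pi/\sqrt\lambda$ follows. Use that free boundary weighted-geodesic lemma rather than doubling, which is not directly available since $\widehat\kappa\neq0$ in general.
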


\begin{proof}

Let $d(x)=d_K(x,\Gamma)$ be the distance to $\Gamma$. Following the forcing construction in
\cite[Lemma~6.1]{ChodoshLi}, with an even smoothing in boundary Fermi charts,
we may choose a smooth function $d_0$ such that $a$, $b$ and $(a+b)/2$ are regular values, and the function $d_0$ satisfies
\[
 |d_0-d|<\delta,\qquad
 |\widehat\nabla d_0|\leq2,\qquad
 \partial_{\widehat{\mu}} d_0=0\quad\text{on }\partial M,
\]
and
\[
 \sup_\Gamma d_0<a<b<\inf_C d_0,
 \qquad
 b-a>\frac{4\pi}{\sqrt\lambda},
 \qquad
 b+\delta<\frac{5\pi}{\sqrt\lambda}.
\]
In particular, the level hypersurfaces meet $\partial M$ orthogonally. On the
open band $\{a<d_0<b\}$, set
\[
 \varphi=\frac{\pi(d_0-a)}{b-a}-\frac\pi2,
 \qquad
 \widetilde h=-\sqrt\lambda\tan\varphi.
\]
Then $|\widehat{\nabla}\varphi|<\sqrt\lambda/2$ and
\begin{equation}
 \lambda+\widetilde h^2-2|\widehat{\nabla}\widetilde h|>0.
 \label{eq:tangent-forcing}
\end{equation}

Let $\Omega_0=\{\varphi<0\}$ and consider the relative warped $\mu$-bubble
functional
\begin{equation*}
 \mathcal A_{u,\widetilde h}(\Omega)
 =
 \int_{\partial^*\Omega\cap\{a<d_0<b\}\cap\mathring M}u
 -
 \int_{\{a<d_0<b\}}
 (\chi_\Omega-\chi_{\Omega_0})\widetilde h\,u.
\end{equation*}
By Proposition~\ref{prop:free-boundary-bubble-existence}, there is a  minimizer $\Omega$ for this functional.
We denote  $\mathcal C$ by the connected component of
\[
\Omega\cup\{x\in K:d_0(x)\leq a\}
\]
containing $\Gamma$. The barrier inequalities imply that
$\mathcal C\subset K\setminus C$ , and
\[
 \partial\mathcal C\cap\mathring M
 =\Gamma\sqcup\mathcal B,
 \qquad
 \mathcal B=\Sigma_1\sqcup\cdots\sqcup\Sigma_m,
 \quad m\geq1.
\]
Each $\Sigma_j$ is stable by local minimality, and
$d\leq d_0+\delta<b+\delta$ on $\mathcal C$ gives
\eqref{eq:bubble-collar-location}.

Since $|\widehat{A}_\Sigma|^2\geq \widehat{H}_\Sigma^2/2$ and
$\widehat{H}_\Sigma=\widetilde h-\langle\widehat{\nabla}\log u,\widehat{\nu}_\Sigma\rangle$, one has
\[
 -\frac34u\widehat{H}_\Sigma^2-\widehat{\nu}_\Sigma(\widetilde h\,u)
 =
 -u\widehat{\nu}_\Sigma(\widetilde h)
 -\frac12u\widetilde h^2
 -\frac12u\bangle{\widehat{\nabla}\log u,\widehat{\nu}_{\Sigma}}^2
 -\frac14u\widehat{H}_\Sigma^2.
\]
Therefore,  
the second variation formula~\eqref{eq:raw-relative-second-variation} gives
\begin{equation}
 \begin{aligned}
 0\leq{}&
 \int_\Sigma u|\widehat{\nabla}\psi|^2
 -\frac12(\widehat{R}_M-\lambda-2\widehat{K}_\Sigma)u\psi^2+(\widehat{\Delta}_Mu-\widehat{\Delta}_\Sigma u)\psi^2
 -\frac12 u^{-1}(\widehat{\nu}_\Sigma u)^2\psi^2\\
 &-\frac12\int_\Sigma
 (\lambda+\widetilde h^2+2\widehat{\nu}_\Sigma(\widetilde h))u\psi^2-\int_{\partial\Sigma}
 \widehat{I}_{\partial M}(\widehat{\nu}_\Sigma,\widehat{\nu}_\Sigma)u\psi^2.
 \end{aligned}
 \label{eq:relative-bubble-second-variation}
\end{equation}
Taking $\psi=u^{-1/2}$, using \eqref{eq:tangent-forcing} and
\[
 \frac{\widehat{\Delta}_Mu}{u}
 \leq\frac12\widehat{R}_M-\lambda,
\]
 we obtain
\begin{equation*}
 \begin{aligned}
 0\leq{}&
 \int_\Sigma\left(
 \widehat{K}_\Sigma-\frac\lambda2
 -\frac34|\widehat{\nabla}_\Sigma\log u|^2
 -\frac12\bangle{\widehat{\nabla}\log u,\widehat{\nu}_{\Sigma}}^2
 -\widehat{\Delta}_\Sigma\log u\right)-\int_{\partial\Sigma}
 \widehat{I}_{\partial M}(\widehat{\nu}_\Sigma,\widehat{\nu}_\Sigma).
 \end{aligned}
\end{equation*}
Integrating by parts and using 
$ -\partial_{\widehat\mu} \log u=\widehat{\beta}$,
we get
\begin{equation*}
 \begin{aligned}
 0\leq{}&
 \int_\Sigma\left(
 \widehat{K}_\Sigma-\frac\lambda2
 -\frac34|\widehat{\nabla}_\Sigma\log u|^2
 -\frac12\bangle{\widehat{\nabla}\log u,\widehat{\nu}_{\Sigma}}^2
\right)+\int_{\partial\Sigma}
 \left(\hat{\beta}-\widehat{I}_{\partial M}(\widehat{\nu}_\Sigma,\widehat{\nu}_\Sigma)\right).
 \end{aligned}
\end{equation*}
Note that
$$
\widehat{H}_{\partial M}
=
\widehat{\kappa}
+
\widehat{I}_{\partial M}(\widehat{\nu}_\Sigma,\widehat{\nu}_\Sigma),$$
holds along $\partial\Sigma$, where $\widehat{\kappa}$ denotes the geodesic curvature of $\partial\Sigma$ in
$\Sigma$ with respect to $\widehat{\mu}$. Consequently, we have
\[
\widehat{\beta}-\widehat{I}_{\partial M}(\widehat{\nu}_\Sigma,\widehat{\nu}_\Sigma)
=
\widehat{\kappa}-(\widehat{H}_{\partial M}-\widehat{\beta}).
\]
It follows that
\begin{equation}
 \begin{aligned}
 0\leq{}&
 2\pi\chi(\Sigma)+\int_\Sigma\left(
 -\frac\lambda2
 -\frac34|\widehat{\nabla}_\Sigma\log u|^2
 -\frac12\bangle{\widehat{\nabla}\log u,\widehat{\nu}_{\Sigma}}^2
\right)-\int_{\partial\Sigma}
(\widehat{H}_{\partial M}-\widehat{\beta}).\label{xeq8}
 \end{aligned}
\end{equation}
Here we also used the  Gauss--Bonnet formula. 

Recall that 
$H_{\partial M}=-q$. Hence
\begin{equation*}
 \widehat H_{\partial M}=\rho(-q-2\langle\nabla\log\rho,\mu\rangle).
\end{equation*}
Consequently,
\begin{align}
 \widehat H_{\partial M}-\widehat\beta
 &=\rho\left(-q-2\langle\nabla\log\rho,\mu\rangle\right)
   -\rho\left(-q-\frac12\langle\nabla\log\rho,\mu\rangle\right)
  =-\frac32\langle\nabla\rho,\mu\rangle\geq0,\label{xeq9}
\end{align}
where the last inequality is from Lemma \ref{lem:rho-estimates}.
It follows from \eqref{xeq8} and \eqref{xeq9} that
\begin{align*}
 \frac\lambda2|\Sigma|
 +\int_\Sigma\frac{3}{4}|\widehat{\nabla}_\Sigma\log u|^2
 +\frac12\langle\widehat{\nabla}\log u,\widehat{\nu}_\Sigma\rangle^2
 \leq2\pi\chi(\Sigma).
\end{align*}
The left-hand side is strictly positive, so $\chi(\Sigma)>0$. Since $\Sigma$
is connected and two-sided, it is diffeomorphic to either $\mathbb S^2$ or
$\mathbb D^2$. The corresponding area bounds follow immediately.

Finally, it follows from
\eqref{eq:relative-bubble-second-variation} that, for every
$w\in C^{\infty}(\Sigma)$,
\begin{align*}
 0\leq{}
 \int_\Sigma u|\widehat{\nabla} w|^2
 -\left(\frac\lambda2-\widehat{K}_\Sigma\right)uw^2
 -(\widehat{\Delta}_\Sigma u)w^2-\int_{\partial\Sigma}
 \widehat{I}_{\partial M}(\widehat{\nu}_\Sigma,\widehat{\nu}_\Sigma)uw^2.
 \end{align*}
Let $w>0$ be the first eigenfunction of this quadratic form. Since its first
eigenvalue is nonnegative, we obtain
\[
 \Div_\Sigma(u\widehat{\nabla} w)
 +\left(\frac\lambda2-\widehat{K}_\Sigma\right)uw
 +w\widehat{\Delta}_\Sigma u
 \leq0,
 \qquad
 \partial_{\widehat{\mu}} w
 =\widehat{I}_{\partial M}(\widehat{\nu}_\Sigma,\widehat{\nu}_\Sigma)w.
\]
For $f=uw$, the identity
\[
 2uw\langle\widehat\nabla u,\widehat\nabla w\rangle
 =
 |\widehat\nabla f|^2-w^2|\widehat\nabla u|^2-u^2|\widehat\nabla w|^2
 \leq|\widehat\nabla f|^2
\]
gives
\[
 \widehat{\Delta}_\Sigma f
 \leq
 -\left(\frac\lambda2-\widehat{K}_\Sigma\right)f
 +\frac{|\widehat{\nabla} f|^2}{2f},
 \qquad
 \partial_{\widehat\mu} f+\widehat{\kappa} f
 =(\widehat{H}_{\partial M}-\widehat{\beta})f\geq0.
\]
Apply the Robin spectral diameter estimate \cite[Lemma~8.2]{Wu}, with
$K_0=\lambda/2$, we have
\[
 \diam(\Sigma)\leq\frac{2\pi}{\sqrt\lambda}.
\]
This completes the proof.
\end{proof}

\section{Intrinsic Euclidean volume growth and Proof of Theorem \ref{thm:main}}
\label{sec:simons}
\begin{proposition}\label{volumegrowth}
    Let $X:(M^3,\partial M)\longrightarrow
 (\R^4_+,\partial\R^4_+)$ be a complete, connected, two-sided stable capillary minimal immersion
with contact angle $\theta$ satisfying \eqref{eq:main-angle}. Then $X$ has intrinsic Euclidean volume growth, i.e., there exists some $C=C_{\theta}$ so that \begin{equation*}
\Vol(B_M(R))\leq C R^3,\quad \forall R>0.
\end{equation*}
\end{proposition}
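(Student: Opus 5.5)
We follow the Chodosh--Li volume-growth scheme, working in $(M,\widehat g)$ with $\widehat g=\rho^{-2}g$ and using one-endedness (Corollary~\ref{cor:one-end}) together with the relative warped $\mu$-bubbles of Lemma~\ref{lem:relative-bubble}. By dilation invariance about $a$ we normalize $r(x_0)=1$. It then suffices to prove that $\Vol_g(B_M(x_0,R_0))\le C_\theta$ for a fixed radius $R_0$ depending only on $\theta$, since scaling turns this into $\Vol(B_M(x_0,R))\le CR^3$ for every $R>0$ and every $x_0$.

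\textbf{Step 1: diameter bound in $(M,\widehat g)$.} Let $B_0=\{x\in M:r(x)\le 1\}$; after a small perturbation of $r$ we may assume it is a smooth relative domain. Let $E$ be the unique end of $M$ given by Corollary~\ref{cor:one-end}. By \eqref{eq:rho-comparison}, $\rho\simeq r$ and $|\nabla r|\le 1$. Hence $\widehat g$-distance along a path is bounded below by $\int |dr|/\rho \ge \log(r_2/r_1)$ up to a constant factor. In particular, the region $\{r\ge L\}$ lies at $\widehat g$-distance at least $c\log L$ from $\{r\le 1\}$.

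Take $K$ to be the component of $M\setminus \mathring{B}_0$ containing the end, so that $\Gamma=\partial K\cap\mathring M$. One-endedness lets us arrange that $\Gamma$ is connected, after filling in the bounded components. Apply Lemma~\ref{lem:relative-bubble} with $C$ a far-out connected compact set in $K$. This yields a collar $\mathcal C$ whose inner boundary $\mathcal B$ consists of disks and spheres of $\widehat g$-diameter at most $2\pi/\sqrt\lambda$, all lying within $\widehat g$-distance $5\pi/\sqrt\lambda$ of $\Gamma$.

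The main step is to show that the component of $M\setminus\mathcal B$ containing $x_0$ has bounded $\widehat g$-diameter; more precisely, that the set $\{r\le 1\}$ is enclosed by a region of $\widehat g$-diameter at most some $D(\theta)$. We carry this out as in \cite[Section~5]{ChodoshLi}. Because $M$ is simply connected and one-ended, a single bubble component $\Sigma_1$ separates the end from $x_0$. Any point of the bounded region can be joined either to $\Gamma$ or to $\Sigma_1$ within controlled distance, and the $\mu$-bubble diameter bound then controls the overall size. This is the step where we expect the main obstacle: $\Gamma$ itself need not have bounded $\widehat g$-diameter. To handle it, we apply the lemma with $\Gamma$ replaced by the bubble obtained from a previous scale, iterating as in Chodosh--Li. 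We also need to check that the relative (free boundary) topology still yields separation, using the Mayer--Vietoris argument relative to $\partial M$ with $H_1(M)=0$.

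\textbf{Step 2: conversion to $g$-volume.} With the enclosing region $\Omega\ni x_0$ of $\widehat g$-diameter at most $D$ in hand, and since $\rho\simeq r$ with $r\ge V(0)^{2\tau}\rho$, we get $\log r$ bounded on $\Omega$ by $D$. Therefore $r\le e^{CD}$ on $\Omega$, and $B_M(x_0,R_0)\subset\Omega$ for $R_0$ comparable to the $g$-distance to $\partial\Omega$, which is at least a fixed constant because $\partial\Omega$ lies where $r\ge 1$ up to controlled factors. For the volume we bound the area of $\partial\Omega$, which is made of $\mu$-bubble pieces, in $g$: $|\Sigma|_g\le (\sup\rho)^2|\Sigma|_{\widehat g}\le C$. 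We also need the number of components $m$ to be bounded. The separation argument only needs the one component $\Sigma_1$, so we take $\Omega$ to be the region bounded by $\Sigma_1$ alone.

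Finally we apply the isoperimetric consequence of \eqref{eq:L1-Sobolev}. Applied to $\chi_\Omega$ (relative perimeter, not counting $\partial M$), it gives $\Vol(\Omega)^{2/3}\le C_\theta|\partial\Omega\cap\mathring M|\le C$. Hence $\Vol(B_M(x_0,R_0))\le C_\theta$, and rescaling gives the proposition.
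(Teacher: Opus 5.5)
\textbf{Gaps in Step~1 and in the containment argument.} Your Step~2 already contains the paper's mechanism: a single enclosing bubble component with $\widehat g$-area at most $8\pi/\lambda$, an upper bound for $\rho$ on it, and the relative isoperimetric inequality from \eqref{eq:L1-Sobolev}. However, the step you call the main one is false as stated, and your justification of $B_M(x_0,R_0)\subset\Omega$ does not work.

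No region containing $\{r\le1\}$ can have $\widehat g$-diameter bounded by a constant $D(\theta)$, and neither can the component of $M\setminus\mathcal B$ containing $x_0$. Since $\rho\le r$, every path from $x_0$ to a point $y$ with $r(y)=\epsilon$ has $\widehat g$-length at least $\log(1/\epsilon)$. Nothing controls $\epsilon$ in terms of $\theta$: take a flat capillary half-hyperplane whose boundary passes at distance $\epsilon$ from $a$, where $\{r\le1\}$ is convex. So no iteration ``as in Chodosh--Li'' can give that bound, and the estimate $r\le e^{CD}$ on $\Omega$ you derive from it is unfounded.

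Separately, ``$\partial\Omega$ lies where $r\ge1$ up to controlled factors'' gives no lower bound on $d_g(x_0,\partial\Omega)$, because $r(x_0)=1$ as well. With $B_0=\{r\le1\}$, the point $x_0$ lies on $\partial B_0$, and Lemma~\ref{lem:relative-bubble} as stated gives no lower bound on the width of the collar. Also, $\{r\le1\}$ may be disconnected, so one-endedness alone does not make $\Gamma$ connected.

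\textbf{How to repair it.} None of the diameter machinery is needed. Take a smooth connected relative domain $B_0$ with $\overline{B_M(x_0,1)}\subset B_0\subset B_M(x_0,3)$, and let $K$ be the unique unbounded component of $M\setminus\mathring B_0$. Then:
\begin{itemize}
\item[(i)] $M\setminus\mathring K$ is connected, so $\Gamma\subset\partial B_0$ is connected by simple connectivity;
\item[(ii)] $\rho\le r\le4$ on $\Gamma$;
\item[(iii)] $B_M(x_0,1)\subset M\setminus\mathring K$ is automatic.
\end{itemize}
Let $W$ be the unbounded component of $M\setminus\bigl((M\setminus\mathring K)\cup\mathcal C\bigr)$, and put $\Omega=M\setminus W$. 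As you say, $\Sigma_1=\partial W\cap\mathring M$ is a single bubble component. The only bound on $\rho$ you need is on $\Sigma_1$. It follows from the lemma's statement that $\mathcal B$ lies within $\widehat g$-distance $5\pi/\sqrt\lambda$ of $\Gamma$, together with $|\widehat\nabla\log\rho|_{\widehat g}=|\nabla\rho|_g\le C_\theta$; this is exactly how the paper bounds $\rho$ on $\Sigma_R$. Because your $K$ contains the end, you should also note that the lemma's proof only uses the band $\{a<d_0<b\}$, which is compact since $\widehat g$ is complete.

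\textbf{Comparison with the paper.} Repaired this way, your argument is a unit-scale, outward version of the paper's proof. The paper keeps $a$ fixed and takes a connected outer boundary $\Gamma_R$ at $g$-distance about $LR$, with $\log L>5\pi/\sqrt\lambda$. It grows the bubble inward toward $B_M(x_0,R)$ and gets $|\Sigma_R|_g\le C_\theta R^2$ and $|U_R|\le C_\theta R^3$ at every large scale, with no rescaling.

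Your route instead rescales, which means re-choosing $a$ for each $R$. But $a$ must lie in $\partial\R^4_+$: that is what gives $z=0$ and $\partial_\mu\rho<0$ on $\partial M$. So $|X(x_0)-a|\approx R$ can be arranged only when $R\gtrsim x_4(x_0)$. You therefore get the estimate for all large $R$, not for every $R>0$. This suffices for Appendix~\ref{app:curvature-estimate}, and it is also all that the paper's proof actually yields.
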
 
\begin{proof}Let $x_0$ be the point fixed after
\eqref{eq:angular-coordinates}, and set
\[
 s(x)=d_M(x_0,x).
\]

Since $V(0)^{-2\tau}r\leq\rho\leq r$ and
$r(x)\leq r(x_0)+s(x)=1+s(x)$,
\begin{equation}
 \rho(x)\leq1+s(x).
 \label{eq:rho-linear}
\end{equation}
The function $\rho$ is homogeneous of degree one under ambient dilations
and has a smooth angular factor; hence
\begin{equation}
 |\widehat\nabla\log\rho|_{\widehat g}
 =|\nabla\rho|_g\leq C_\theta.
 \label{eq:rho-log-gradient}
\end{equation}
Moreover, \((M,\widehat g)\) is complete. Indeed, let \(\gamma:[0,\infty)\to M\) be a \(g\)-unit-speed smooth curve that eventually leaves every compact subset of \(M\). Since \(s(\gamma(t))\leq s(\gamma(0))+t\), inequality \eqref{eq:rho-linear} gives
\[L_{\widehat g}(\gamma)
=\int_0^\infty\frac{dt}{\rho(\gamma(t))}
\geq
\int_0^\infty
\frac{dt}{1+s(\gamma(t))}
\geq
\int_0^\infty
\frac{dt}{1+s(\gamma(0))+t}
=\infty.\]
Since \(M\) is complete, the sublevel sets of \(s\) are compact. Choosing a smooth  function $f$ such that
\[
 |f-s|<\frac12,\qquad
 \partial_\mu f=0\quad\text{on }\partial M.
\]
Fix a large constant \(L=L(\theta)\). For sufficiently large \(R\), choose a compact connected smooth domain \(\widetilde K_R\) whose interior boundary meets \(\partial M\) orthogonally and such that
\[
\overline{B_M(x_0,LR-2)}
\subset\mathring{\widetilde K}_R,
\qquad
\widetilde K_R\subset B_M(x_0,LR+2).
\]
Let \(K_R\) be the union of \(\widetilde K_R\) and all bounded components of \(M\setminus\widetilde K_R\). Since \(M\) has one end, then \(M\setminus K_R\) is connected. Moreover,
\[
B_M(x_0,LR-2)\subset\mathring K_R,
\qquad
\Gamma_R:=\partial K_R\cap\mathring M
\subset\{LR-2<s<LR+2\},
\]
and \(\Gamma_R\) meets \(\partial M\) orthogonally.

The hypersurface $\Gamma_R$ is connected.  Indeed, if it had two distinct
components, the connectedness of $K_R$ and $M\setminus\mathring K_R$ would yield
a closed curve in $M$ intersecting one component of $\Gamma_R$ transversely exactly once, contradicting the simple connectivity of \(M\).
Let $\gamma:[0,\ell]\to M$ be a curve parametrized by
$g$-arclength, joining $\overline{B_M(x_0,R)}$ to $\Gamma_R$.
By \eqref{eq:rho-linear} and $(s\circ\gamma)'\leq1$ almost
everywhere,
\begin{equation}
\begin{aligned}
L_{\widehat g}(\gamma)
&=\int_0^\ell\frac{dt}{\rho(\gamma(t))}
\geq\int_0^\ell
\frac{(s\circ\gamma)'(t)}{1+s(\gamma(t))}\,dt\\
&=\log\frac{1+s(\gamma(\ell))}{1+s(\gamma(0))}
\geq\log\frac{LR-1}{R+1}.
\end{aligned}
\label{eq:conformal-collar-distance}
\end{equation}
Choose $L$ such that $\log L>5\pi/\sqrt{\lambda}$.
For sufficiently large $R$, \eqref{eq:conformal-collar-distance}
gives
\[
d_{\widehat g}\bigl(\overline{B_M(x_0,R)},\Gamma_R\bigr)
>\frac{5\pi}{\sqrt{\lambda}}.
\]
 Apply Lemma~\ref{lem:relative-bubble} to $(M,\widehat g)$ with
\[
 K=K_R,\qquad \Gamma=\Gamma_R,
 \qquad C=\overline{B_M(x_0,R)}.
\]
This yields a compact connected collar $\mathcal C_R\subset K_R$ with
\begin{equation*}
 \partial\mathcal C_R\cap\mathring M
 =\Gamma_R\sqcup\mathcal B_R,
 \qquad
 \mathcal B_R=\Sigma_R^1\sqcup\cdots\sqcup\Sigma_R^{m_R}
\end{equation*}
where every $\Sigma_R^j$ is a $\mu$-bubble component and
\begin{equation*}
 \mathcal C_R\subset
 \{d_{K_R,\widehat g}(\cdot,\Gamma_R)
       <5\pi/\sqrt{\lambda}\}.
\end{equation*}
In particular, $\mathcal C_R$ is disjoint from
$\overline{B_M(x_0,R)}$.

Let $U_R$ be the component of $K_R\setminus\mathcal C_R$
containing $\overline{B_M(x_0,R)}$.
Since $\mathcal B_R$ is a properly embedded, two-sided free
boundary hypersurface, $\partial U_R\cap\mathring M$ is a union
of connected components of $\mathcal B_R$.

The set $(M\setminus\mathring K_R)\cup\mathcal C_R$ is connected
and meets the closure of every component of
$K_R\setminus\mathcal C_R$ other than $U_R$.
Hence $M\setminus U_R$ is connected.

Since $\overline{U_R}$ is compact and $M$ is noncompact,
the interior boundary
\[
\Sigma_R=\partial U_R\cap\mathring M
\]
is nonempty. The same argument used for $\Gamma_R$, together
with the simple connectivity of $M$, shows that $\Sigma_R$
is connected. Thus $\Sigma_R$ is a component of
$\mathcal B_R$, and \eqref{eq:bubble-bounds} gives
\[
|\Sigma_R|_{\widehat g}\leq\frac{8\pi}{\lambda}.
\]

On $\Gamma_R$, \eqref{eq:rho-linear} gives $\rho\leq C_\theta R$.
Every point of $\Sigma_R$ lies at uniformly bounded
$\widehat g$-distance from $\Gamma_R$. Hence
\eqref{eq:rho-log-gradient} gives $\rho\leq C_\theta R$
on $\Sigma_R$. Therefore
\begin{equation}
|\Sigma_R|_g
=\int_{\Sigma_R}\rho^2\,dA_{\widehat g}
\leq C_\theta R^2.
\label{eq:g-area}
\end{equation}

Applying \eqref{eq:L1-Sobolev} to smooth approximations of
$\chi_{U_R}$, we obtain
\begin{equation}
|U_R|^{2/3}
\leq C_\theta|\partial U_R\cap\mathring M|
=C_\theta|\Sigma_R|_g.
\label{eq:relative-isoperimetry}
\end{equation}
Since $B_M(x_0,R)\subset U_R$, equations \eqref{eq:g-area}
and \eqref{eq:relative-isoperimetry} imply
\begin{equation*}
\Vol(B_M(x_0,R))\leq C_\theta R^3.
\label{eq:intrinsic-growth}
\end{equation*}
\end{proof}
\noindent{\it Proof of Theorem \ref{thm:main}.} 
According to the contact condition \eqref{eq:main-angle}, we have
\[
 \frac{|\cos\theta|}{\sin^2\theta}
 <\frac{c_*}{1-c_*^2}
 =0.5023919707\ldots
 <\frac{9\sqrt3}{28},
\]
By Propositions \ref{volumegrowth} and \ref{prop:capillary-curvature-estimate}, $X$ is flat.
\qed

\appendix
\section{The angular ODE}
\label{app:angular-ode}

We prove here the properties of $p$  in
\eqref{eq:angular-properties}.  Put $t=(1-z)/2$.  Then
\eqref{eq:angular-ODE} becomes
\begin{equation*}
 t(1-t)V_{tt}+(1-2t)V_t-\frac14V=0.
\end{equation*}
 After normalizing
its value at $t=0$ to be $1$, its coefficients $a_k$ satisfy
\begin{equation*}
 a_0=1,
 \qquad
 (k+1)^2a_{k+1}=\left(k+\frac12\right)^2a_k.
\end{equation*}
The  evaluation of this series and its derivative at
$t=1/2$, together with $\frac{d}{dz}=-\frac12 \frac{d}{dt}$, gives the corresponding value at $z=0$,
\begin{equation*}
 V(0)=\frac{\Gamma(1/4)^2}{2\pi^{3/2}},
 \qquad
 V'(0)=-\frac{\Gamma(3/4)^2}{\pi^{3/2}}.
\end{equation*}
These are precisely the initial data in \eqref{eq:V-initial-data}.
Uniqueness for the initial value problem at $z=0$ therefore shows that the
solution defined in the main text extends smoothly to $z=1$ and satisfies
$V(1)=1$.

Equation \eqref{eq:angular-ODE} can be written as
\begin{equation}
 \bigl((1-z^2)V'(z)\bigr)'=\frac14V(z).
 \label{eq:V-divergence-form}
\end{equation}
Since $V$ is smooth at $z=1$, integration from $z$ to $1$ gives
\begin{equation}
 (1-z^2)V'(z)=-\frac14\int_z^1V(t)\,dt.
 \label{eq:V-integrated}
\end{equation}
Since $V(1)=1$, $V$ is positive near $z=1$.  If $V$ vanished in
$[0,1)$, let $z_0$ be its largest zero.  Then $V>0$ on $(z_0,1]$, whereas
\eqref{eq:V-integrated} gives $V'(z_0)<0$, a contradiction.  Hence $V>0$
on $[0,1]$.  Equation \eqref{eq:V-integrated} then gives $V'<0$ on $[0,1)$,
which also proves \eqref{eq:V-bounds} and the positivity assertion in
\eqref{eq:angular-properties}.

The initial value gives
\begin{equation*}
 p(0)=-2\frac{V'(0)}{V(0)}
 =4\frac{\Gamma(3/4)^2}{\Gamma(1/4)^2}
 =\kappa_*.
\end{equation*}
Moreover, setting $z=0$ in
\eqref{eq:V-integrated} yields
\begin{equation}
 p(0)=\frac{1}{2V(0)}\int_0^1V(t)\,dt<\frac12,
 \label{eq:p-origin-bound}
\end{equation}.

From \eqref{eq:p-definition}, equation \eqref{eq:angular-ODE} becomes
\eqref{eq:p-Riccati}, and therefore
\begin{equation}
 2(1-z^2)\bigl(p'+p^2\bigr)
 =3(1-z^2)p^2+4zp-1.
 \label{eq:p-comparison-identity}
\end{equation}
Evaluating \eqref{eq:angular-ODE} and its derivative at $z=1$ gives
\begin{equation*}
 V'(1)=-\frac18,
 \qquad
 V''(1)=\frac9{128},
\end{equation*}
and hence
\begin{equation*}
 p(1)=\frac14,
 \qquad
 p'(1)=-\frac7{64}.
\end{equation*}
Thus the right-hand side of \eqref{eq:p-comparison-identity} vanishes at
$z=1$, and its derivative there equals $3/16$.  It is therefore negative
for $z<1$ sufficiently close to $1$.

At any interior zero of the right hand side of
\eqref{eq:p-comparison-identity}, we have $p'=-p^2$ and $4zp<1$,
so its derivative is
\[
2p(1-zp)>0.
\]
Since it is
negative near $z=1$, it must be nonpositive throughout $[0,1]$,
proving the differential inequality in \eqref{eq:angular-properties}.
Finally, this differential inequality shows that $p$ is decreasing.
Together with the positivity of $p$ and \eqref{eq:p-origin-bound}, this gives
\begin{equation*}
 zp(z)\leq p(z)\leq p(0)<\frac12,
 \qquad 0\leq z\leq1,
\end{equation*}
and proves the last assertion in \eqref{eq:angular-properties}.

\section{An integral curvature estimate}
\label{app:curvature-estimate}
 Li-Zhou-Zhu \cite{LiZhouZhu} extended Schoen-Simon-Yau's result \cite{SchoenSimonYau} to capillary case. That is, they proved stable Bernstein theorem for capillary hypersurfaces in $\R^{n+1}$,  $3\leq n\leq 5$ provided the intrinsic Euclidean volume growth condition, when the contact angle $\theta$ belongs to some restricted interval. For example, for $n=3$, $\theta$ satisfies $$\frac{5}{3}+\frac{3\sqrt{2}}{2}+(\sqrt{2}-1)G_{\theta}-\frac{5\sqrt{2}}{2}G_{\theta}^{2}>0,\quad \hbox{ where }
G_{\theta}=\frac{(1+|\cos{\theta}|)^{3}}{\sin^{2}\theta}.
$$
The range above is $|\cos\theta|<0.0304949469\ldots.$
In the following, we extend the range of  $\theta$ for $n=3$.
\begin{proposition}\label{prop:capillary-curvature-estimate}
Let $X:(M^3,\partial M)\longrightarrow
 (\R^4_+,\partial\R^4_+)$ be a complete, connected, two-sided stable capillary minimal immersion
with contact angle $\theta$ satisfying
 $$\frac{|\cos\theta|}{\sin^2\theta}<\frac{9\sqrt3}{28},$$
Assume $M$ has intrinsic Euclidean volume growth, then $M$ is flat. 
\end{proposition}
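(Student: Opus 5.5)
The plan is to run the Schoen--Simon--Yau argument as in Li--Zhou--Zhu. The new ingredient is that the sign-indefinite capillary boundary terms are converted into interior terms by the divergence-theorem trick used in Proposition~\ref{prop:L2-vanishing} (the identity \eqref{eq-boundary-interior} with $Y=E_4^T$). The interior terms are then absorbed by the good terms from stability and Simons' inequality; the angle restriction is exactly what makes this absorption possible.

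\textbf{Step 1: Simons inequality and boundary derivative of $|h|$.} For $n=3$ I use Simons' identity with the improved Kato inequality,
\[
|h|\Delta|h|+|h|^4\ \ge\ \tfrac{2}{3}|\nabla|h||^2 \quad\text{(weakly)}.
\]
I also need the Neumann derivative $\partial_\mu|h|^2$ on $\partial M$. I compute it from \eqref{eq:capillary-boundary}, the fact that $\mu$ is a principal direction, and Codazzi. This should express $\partial_\mu|h|^2$ as $\cot\theta$ times a cubic polynomial in $h$, for instance a combination of $h(\mu,\mu)|h|^2$ and $\tr(h|_{T\partial M})^3$.

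\textbf{Step 2: stability with a power of $|h|$.} I insert $f=|h|^{1+q}\phi$ into the stability inequality. Combining with the weighted Simons inequality gives, for a suitable range of $q$,
\[
c(q)\int_M |h|^{4+2q}\phi^2\ \le\ C\int_M|h|^{2+2q}|\nabla\phi|^2\ +\ \cot\theta\int_{\partial M}\mathcal Q(h)\,|h|^{2q}\phi^2 .
\]
Here $\mathcal Q$ is the cubic boundary expression, which collects $q\,|h|^2=\cot\theta\,h(\mu,\mu)|h|^2$ and the term from Step~1.

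\textbf{Step 3 (main obstacle): absorbing the boundary term.} I would write $\mathcal Q(h)|h|^{2q}=S(\mu,\mu)$ for a symmetric tensor $S$ built from $h\otimes h\otimes h$ times $|h|^{2q}$, in the spirit of $P$ in \eqref{eq:general-stress}. Using $\mu=-Y/\sin\theta$, the boundary integral equals
\[
-\frac{1}{\sin\theta}\int_M \operatorname{div}\bigl(\phi^2 S(Y,\cdot)\bigr).
\]
Expanding the divergence produces three kinds of terms. The $\nabla S$ terms are bounded by $|h|^{2+2q}|\nabla h|$. The $\langle E_4,\nu\rangle S\cdot h$ terms are bounded by $|h|^{4+2q}$. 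The $\phi\nabla\phi$ terms go into the cutoff error. I then bound the $\nabla h$ terms by Cauchy--Schwarz against the retained Kato and $|\nabla|h||^2$ terms. The sharp constants for traceless $3\times3$ matrices should enter here, namely $|h(\mu,\mu)|\le\sqrt{2/3}\,|h|$ and $|\tr h^3|\le|h|^3/\sqrt6$; I expect them to produce the threshold
\[
\frac{|\cos\theta|}{\sin^2\theta}<\frac{9\sqrt3}{28}.
\]
Under this condition the boundary contribution is at most $(1-\delta)$ times the good terms, up to cutoff errors. Tracking these constants, and choosing $q$ and the Young parameters optimally, is where I expect most of the difficulty.

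\textbf{Step 4: conclusion.} After absorption, the standard Schoen--Simon--Yau Hölder iteration with $\phi^{2+q}$-type cutoffs gives
\[
\int_{B_R}|h|^{4+2q}\le CR^{-(4+2q)}\operatorname{Vol}(B_{2R})
\]
for some admissible $q$ with $4+2q>3$. The intrinsic Euclidean volume growth $\operatorname{Vol}(B_{2R})\le CR^3$ makes the right-hand side tend to $0$ as $R\to\infty$. Hence $h\equiv0$, and $M$ is flat.
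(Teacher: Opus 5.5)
Your outer structure matches the paper's: Simons' identity with the refined Kato inequality, stability tested with $|h|^{2/3}\varphi$, and the Schoen--Simon--Yau iteration with $\varphi^{5/3}$. In the paper, stability is multiplied by $\tfrac32$ so the cross terms cancel. This leaves $\tfrac12\int_M|h|^{10/3}\varphi^2+\tfrac13\int_M|h|^{-2/3}|\nabla|h||^2\varphi^2$ on the left and the boundary integrand $\cot\theta\,|h|^{-2/3}\bigl(\tfrac12h(\mu,\mu)|h|^2-\tr(h^3)\bigr)\varphi^2$ on the right.

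The gap is Step~3. The whole angle condition is decided there, and you only state an expectation. The tensor mechanism you propose is not what produces $9\sqrt3/28$:
\begin{itemize}
\item The natural cubic tensor with the right $(\mu,\mu)$-component, and by Cayley--Hamilton essentially the only one, is $S=\tfrac12|h|^2h-\tr(h^3)\,g$.
\item Its divergence contains $\nabla\tr(h^3)=3\langle h^2,\nabla h\rangle$, i.e.\ the full $\nabla h$. If you keep $|\nabla h|^2$ instead of applying Kato, the combined inequality controls it only through $|\nabla h|^2-\tfrac43|\nabla|h||^2\ge\tfrac15|\nabla h|^2$.
\item The term $S_{ij}\nabla_iY_j=-\tfrac12\langle E_4,\nu\rangle|h|^4$ creates a new zeroth-order term that competes directly with $\tfrac12\int_M|h|^{10/3}\varphi^2$.
\end{itemize}
Which threshold survives would need a joint pointwise optimization that you have not done, and there is no reason it should be $9\sqrt3/28$. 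Also, the constants you quote, combined by the triangle inequality, give only $\bigl|\tfrac12h(\mu,\mu)|h|^2-\tr(h^3)\bigr|\le\tfrac{2}{\sqrt6}|h|^3$. Even in the paper's scheme this yields $|\cos\theta|/\sin^2\theta<\tfrac37$, not $\tfrac{9\sqrt3}{28}$.

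Two ideas are missing. First, the exact factorization, writing $\kappa_1=h(\mu,\mu)$ and using $\kappa_1+\kappa_2+\kappa_3=0$:
\[
\tfrac12h(\mu,\mu)|h|^2-\tr(h^3)=-(\kappa_2+\kappa_3)(\kappa_2-\kappa_3)^2,\qquad \bigl|(\kappa_2+\kappa_3)(\kappa_2-\kappa_3)^2\bigr|\le\tfrac{4\sqrt2}{9}|h|^3 .
\]
Second, no tensor is needed. Bound the boundary integrand pointwise by $\tfrac{4\sqrt2}{9}|\cot\theta|\,|h|^{7/3}\varphi^2$ and apply the scalar identity \eqref{eq:trace-identity} to $\zeta=|h|^{7/3}\varphi^2$. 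Since $\Div_M E_4^T=0$, no zeroth-order term appears, and the derivative falls only on $|h|$. This bounds the boundary term by $\tfrac{4\sqrt2|\cos\theta|}{9\sin^2\theta}\int_M\bigl(\tfrac73|h|^{4/3}|\nabla|h||\varphi^2+2|h|^{7/3}\varphi|\nabla\varphi|\bigr)$. Young's inequality against the retained $\tfrac13\int_M|h|^{-2/3}|\nabla|h||^2\varphi^2$ then leaves the coefficient $\tfrac12-\tfrac{392\cos^2\theta}{243\sin^4\theta}$ in front of $\int_M|h|^{10/3}\varphi^2$. That coefficient is positive exactly when $|\cos\theta|/\sin^2\theta<9\sqrt3/28$.
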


\begin{proof}
Reverse the chosen normal if necessary and replace $\theta$ by
$\pi-\theta$, so that $\theta\leq\pi/2$. The Simons identity and the refined
Kato inequality for the trace-free Codazzi tensor $h$ imply on
$\{|h|>0\}$ that
\[
\frac12\Delta|h|^2=|\nabla h|^2-|h|^4,
\qquad
|\nabla h|^2\geq\frac53|\nabla|h||^2.
\]

 Since the co-normal vector $\mu$ is a principal
direction along $\partial M$, we choose an orthonormal frame $\{\mu,e_2,e_3\}$ that diagonalizes  $h$. By the
Codazzi equation and the capillary boundary condition, we have
\[
\nabla_\mu h_{ab}
=\cot\theta\bigl(h(\mu,\mu)h_{ab}-(h_T^2)_{ab}\bigr),
\qquad 2\leq a,b\leq 3.
\]
Taking the tangential trace and using $\operatorname{tr}h=0$, we obtain
\[
\nabla_\mu h(\mu,\mu)=\cot\theta\,|h|^2.
\]
Consequently,
\begin{equation}\label{boundary-identity-new}
\frac12\partial_{\mu}(|h|^2)
=\cot\theta\bigl(2|h|^2h(\mu,\mu)-\operatorname{tr}(h^3)\bigr).
\end{equation}

Fix $\varphi\in C_c^\infty(M)$ and write
\[
|h|_\varepsilon=(|h|^2+\varepsilon^2)^{1/2}.
\]
Direct differentiation gives
\[
|h|_\varepsilon\Delta|h|_\varepsilon+|\nabla|h|_\varepsilon|^2
=\frac12\Delta|h|^2=|\nabla h|^2-|h|^4,
\]
and hence
\begin{equation}\label{regularized-new}
|h|_\varepsilon\Delta|h|_\varepsilon+|h|^4
\geq\frac23|\nabla|h|_\varepsilon|^2
\end{equation}
everywhere on $M$.

Multiplying \eqref{regularized-new} by
$|h|_\varepsilon^{-2/3}\varphi^2$ and integrating by parts gives
\begin{align}
\int_M|h|_\varepsilon^{-2/3}|\nabla|h|_\varepsilon|^2\varphi^2
&+2\int_M|h|_\varepsilon^{1/3}\varphi
\langle\nabla|h|_\varepsilon,\nabla\varphi\rangle \notag\\
&\leq\int_M|h|^4|h|_\varepsilon^{-2/3}\varphi^2
+\int_{\partial M}|h|_\varepsilon^{1/3}
\partial_\mu|h|_\varepsilon\varphi^2.
\label{n3-first}
\end{align}
Testing the stability inequality with $|h|_\varepsilon^{2/3}\varphi$,
multiplying it by $3/2$, and using \eqref{n3-first}, we obtain
\begin{align}
\frac32\int_M|h|^2|h|_\varepsilon^{4/3}\varphi^2
-\int_M|h|^4|h|_\varepsilon^{-2/3}\varphi^2
&+\frac13\int_M|h|_\varepsilon^{-2/3}
|\nabla|h|_\varepsilon|^2\varphi^2 \notag\\
&\leq\frac32\int_M|h|_\varepsilon^{4/3}|\nabla\varphi|^2+\int_{\partial M}\varphi^2
\left(|h|_\varepsilon^{1/3}\partial_\mu|h|_\varepsilon
-\frac32q|h|_\varepsilon^{4/3}\right),
\label{n3-second}
\end{align}
where $q=\cot\theta\,h(\mu,\mu)$. On $\{|h|>0\}$,
\[
|h|_\varepsilon^{-2/3}|\nabla|h|_\varepsilon|^2
=|h|^2|h|_\varepsilon^{-8/3}|\nabla|h||^2,
\]
and these quantities increase as $\varepsilon\downarrow0$. Hence monotone
convergence applies to the gradient term. All interior zeroth-order terms
converge by dominated convergence. Moreover, \eqref{boundary-identity-new}
implies
\[
|h|_\varepsilon^{1/3}\partial_\mu|h|_\varepsilon
=\cot\theta\,|h|_\varepsilon^{-2/3}
\bigl(2|h|^2h(\mu,\mu)-\operatorname{tr}(h^3)\bigr).
\]
Its absolute value, together with
$|q||h|_\varepsilon^{4/3}$, is bounded on
$\partial M\cap\operatorname{supp}\varphi$ by
$C_\theta(|h|^2+1)^{7/6}$. Thus dominated convergence theorem also applies to the
boundary term. Letting $\varepsilon\downarrow0$ in \eqref{n3-second}, we obtain
\begin{align}
&\frac12\int_M|h|^{10/3}\varphi^2
+\frac13\int_{\{|h|>0\}}|h|^{-2/3}|\nabla|h||^2\varphi^2\notag\\
\leq&\frac32\int_M|h|^{4/3}|\nabla\varphi|^2 
+\cot\theta\int_{\partial M}|h|^{-2/3}
\left(\frac12h(\mu,\mu)|h|^2-\operatorname{tr}(h^3)\right)
\varphi^2.
\label{n3-limit}
\end{align}
Writing
$\kappa_1=h(\mu,\mu)$,
$\kappa_1+\kappa_2+\kappa_3=0$,
we have
\[
\frac12h(\mu,\mu)|h|^2-\operatorname{tr}(h^3)
=-(\kappa_2+\kappa_3)(\kappa_2-\kappa_3)^2
\]
and
\[
| (\kappa_2+\kappa_3)(\kappa_2-\kappa_3)^2 |
\leq\frac{4\sqrt2}{9}|h|^3.
\]
Apply (\ref{eq:trace-identity})
to $\zeta=|h|^{7/3}\varphi^2$. The Cauchy--Schwarz inequality gives
\begin{align*}
&\cot\theta\int_{\partial M}|h|^{-2/3}
\left(\frac12h(\mu,\mu)|h|^2-\operatorname{tr}(h^3)\right)\varphi^2\\
&\quad\leq\frac{4\sqrt2\cos\theta}{9\sin^2\theta}
\left[
\frac73
\left(\int_{\{|h|>0\}}|h|^{-2/3}|\nabla|h||^2\varphi^2\right)^{1/2}
\left(\int_M|h|^{10/3}\varphi^2\right)^{1/2}
\right.\\
&\quad\qquad\left.
+2\left(\int_M|h|^{4/3}|\nabla\varphi|^2\right)^{1/2}
\left(\int_M|h|^{10/3}\varphi^2\right)^{1/2}
\right].
\end{align*}
By Young's inequality,
\begin{align*}
&\frac{28\sqrt2\cos\theta}{27\sin^2\theta}
\left(\int_{\{|h|>0\}}|h|^{-2/3}|\nabla|h||^2\varphi^2\right)^{1/2}
\left(\int_M|h|^{10/3}\varphi^2\right)^{1/2}\\
&\qquad\leq
\frac13\int_{\{|h|>0\}}|h|^{-2/3}|\nabla|h||^2\varphi^2
+\frac{392\cos^2\theta}{243\sin^4\theta}
\int_M|h|^{10/3}\varphi^2.
\end{align*}
Combining this with \eqref{n3-limit} and the preceding boundary estimate gives
\begin{align}
\left(\frac12-\frac{392\cos^2\theta}{243\sin^4\theta}\right)
\int_M|h|^{10/3}\varphi^2
&\leq\frac32\int_M|h|^{4/3}|\nabla\varphi|^2 \notag\\
&\quad+
\frac{8\sqrt2\cos\theta}{9\sin^2\theta}
\left(\int_M|h|^{4/3}|\nabla\varphi|^2\right)^{1/2}
\left(\int_M|h|^{10/3}\varphi^2\right)^{1/2}.
\label{n3-after-first-young}
\end{align}
The coefficient on the left is positive precisely when
$
\frac{\cos^2\theta}{\sin^4\theta}<\frac{243}{784}$,
A second application of Young's inequality gives
\begin{align*}
&\frac{8\sqrt2\cos\theta}{9\sin^2\theta}
\left(\int_M|h|^{4/3}|\nabla\varphi|^2\right)^{1/2}
\left(\int_M|h|^{10/3}\varphi^2\right)^{1/2}\\
&\qquad\leq\frac12
\left(\frac12-\frac{392\cos^2\theta}{243\sin^4\theta}\right)
\int_M|h|^{10/3}\varphi^2
+\frac{64\cos^2\theta}
{81\sin^4\theta
\left(\frac12-\frac{392\cos^2\theta}{243\sin^4\theta}\right)}
\int_M|h|^{4/3}|\nabla\varphi|^2.
\end{align*}
Replacing $\varphi$ by $\varphi^{5/3}$ 
and applying H\"older's inequality, we obtain
\[
\begin{aligned}
\int_M|h|^{10/3}\varphi^{10/3}
\leq C_\theta
\int_M|h|^{4/3}\varphi^{4/3}|\nabla\varphi|^2\leq C_\theta
\left(\int_M|h|^{10/3}\varphi^{10/3}\right)^{2/5}
\left(\int_M|\nabla\varphi|^{10/3}\right)^{3/5}.
\end{aligned}
\]
Consequently,
\begin{align}\label{eq:curvature-Caccioppoli}
\int_M|h|^{10/3}\varphi^{10/3}
\leq C_\theta\int_M|\nabla\varphi|^{10/3}
\end{align}
 holds for every compactly supported smooth function \(\varphi\) on \(M\).
Let \(\chi_R\) be a smooth cutoff function such that
\[
 \chi_R=1\quad\text{on }B_M(x_0,R),\qquad
 \chi_R=0\quad\text{on }M\setminus B_M(x_0,2R),\qquad
 |\nabla\chi_R|\leq\frac2R.
\]
Using \(\phi=\chi_{R}\) in
\eqref{eq:curvature-Caccioppoli}, we obtain
\[
 \int_M|h|^{10/3}\chi_R^{10/3}
 \leq C_\theta\int_M|\nabla\chi_R|^{10/3}
 \leq C_\theta R^{-10/3}\Vol B_M(x_0,2R)
 \leq C_\theta R^{-1/3}.
\]
Letting $R\to \infty$, we get \(h\equiv0\), and \(M\) is flat.
\end{proof}

\begin{remark}\label{compare-LiZhouZhu}
We make a comparison for the technique used here with that of Li--Zhou--Zhu \cite[Appendix~C]{LiZhouZhu}. In \cite[Appendix~C]{LiZhouZhu}, they used the bound
\[
\left|\partial_\mu|h|^2\right|
\leq 6\sqrt{n-1}\,|\cot\theta|\,|h|^3
\]
and a trace inequality to estimate the boundary integral arising
from Simons' inequality. Combining this estimate with  stability inequality
gives their  curvature estimate.

For our derivation, we first combine the Simons and stability
inequalities so that the boundary terms partially cancel.
We then use
\[
\left|\frac12h(\mu,\mu)|h|^2-\operatorname{tr}(h^3)\right|
\leq\frac{4\sqrt2}{9}|h|^3.
\]
This gives a wider range of contact angles for $n=3$ than that
stated in \cite[Theorem~C.1]{LiZhouZhu}.
\end{remark}

\end{document}